\documentclass[reqno,12pt]{amsart}%
\usepackage{amsfonts}
\usepackage{amsmath}
\usepackage{amssymb}
\usepackage{graphicx}%
\setcounter{MaxMatrixCols}{30}
\setlength{\parindent}{0in} \setlength{\parskip}{0.5\baselineskip}
\setlength{\footskip}{30pt} \tolerance=1000
\setlength{\voffset}{-0.5in} \setlength{\hoffset}{-0.5in}
\setlength{\textheight}{9in} \setlength{\textwidth}{6in}

\def\mbE{{\mathbb{E}}}
\def\mbF{{\mathbb{F}}}

\def\mbR{{\mathbb{R}}}

\def\bA{{\mathbf{A}}}

\def\bM{{\mathbf{M}}}

\def\cE{{\mathcal{E}}}

\def\cJ{{\mathcal{J}}}

\def\cU{{\mathcal{U}}}
\def\cV{{\mathcal{V}}}
\def\cL{{\mathcal{L}}}

\def\mfku{{\mathfrak{u}}}

\def\inc{u^0}

\def\oh{\hat{\otimes}}

\def\a{\alpha}
\def\dW{{\dot{W}}}

\def\ds{\displaystyle}

\newtheorem{theorem}{Theorem}
\theoremstyle{plain}
\newtheorem{corollary}[theorem]{Corollary}
\newtheorem{definition}[theorem]{Definition}
\newtheorem{proposition}[theorem]{Proposition}
\newtheorem{example}[theorem]{Example}

\newtheorem{remark}[theorem]{Remark}

\renewcommand\endproof{\hfill $\Box$\vskip 0.15in}

\numberwithin{equation}{section}
\numberwithin{theorem}{section}

\begin{document}
\title[Skorokhod Integral and SPDEs]
{ A Unified Approach to
Stochastic Evolution Equations Using the Skorokhod Integral}
\author{S. V. Lototsky}
\curraddr[S. V. Lototsky]{Department of Mathematics, USC\\
Los Angeles, CA 90089}
\email[S. V. Lototsky]{lototsky@math.usc.edu}
\urladdr{http://math.usc.edu/$\sim$lototsky}
\author{B. L. Rozovskii}
\curraddr[B. L. Rozovskii]{Division of Applied Mathematics \\
Brown University\\
Providence, RI 02912}
\email[B. L. Rozovskii]{rozovsky@dam.brown.edu}
\urladdr{http://www.dam.brown.edu/people/rozovsky.html}
\thanks{S. V. Lototsky acknowledges support
 from  the
NSF CAREER award DMS-0237724.}
\thanks{B. L. Rozovskii acknowledges support
 from NSF Grant DMS
0604863, ARO Grant W911NF-07-1-0044, and
ONR Grant N00014-07-1-0044.}
\subjclass[2000]{Primary 60H15; Secondary 35R60, 60H40}
\keywords{Generalized Random Elements, Malliavin Calculus,
  Wick Product, Wiener Chaos, Weighted Spaces}

\begin{abstract}
We study stochastic evolution equations
 driven by Gaussian
noise. The key features of the model are that the operators
in the deterministic and stochastic parts can have the same order
and the noise can be time-only, space-only, or space-time.
   Even the simplest equations of this kind do
not have a
square-integrable solution and must be solved in special weighted
spaces. We
demonstrate that the Cameron-Martin version of the Wiener chaos
decomposition leads to natural weights and a natural
replacement of the square integrability condition.

\end{abstract}
\maketitle

\today

\section{Introduction}

Let $W$ be a standard Brownian motion, $u_0\in L_2(\mbR)$, a
deterministic function, and $\sigma$, a real number.
It is well known  that
the It\^{o} equation
\begin{equation}
\label{intr0}
u(t,x)=u_0(x)+\int_0^tu_{xx}(s,x)ds+\int_0^t\sigma u(s,x)dW(s)
\end{equation}
 has a solution $u(t,x)$ and
 \begin{equation}
 \label{intr1}
 \int_{\mbR}\mbE u^2(t,x)dx<\infty
 \end{equation}
 for every $t>0$ and all $\sigma\in \mbR$;
 see, for example, \cite{Roz} or simply
consider the Fourier transform of the equation.
 On the other hand,  the solution of equation
 \begin{equation}
\label{intr2}
u(t,x)=u_0(x)+\int_0^tu_{xx}(s,x)ds+\int_0^t\sigma u_x(s,x)dW(s)
\end{equation}
satisfies \eqref{intr1} for an arbitrary $u_0\in L_2(\mbR)$
if and only if $|\sigma|\leq \sqrt{2}$,
while equation
\begin{equation}
\label{intr3}
u(t,x)=u_0(x)+\int_0^tu_{xx}(s,x)ds+\int_0^t\sigma u_{xx}(s,x)dW(s)
\end{equation}
hardly ever has a solution satisfying \eqref{intr1}.
Still, because of the It\^{o} integral, all three equations
share a common feature, namely, that the mean evolution
is described by the heat equation. One of the objectives of this work
is to develop a unified approach that would cover all three equations
above without any special restrictions on the initial condition or
the amplitude of noise.

The situation becomes even more complicated when the noise
has a spacial component. As an illustration, take a standard
random variable $\xi$ and consider the equation
\begin{equation*}
\label{intr333}
u(t)=1+\int_0^tu(s)ds+\int_0^tu(s)\xi ds.
\end{equation*}
The solution is clearly $u(t)=e^te^{\xi t}$. The unperturbed
solution is $e^t$, so that
$\mbE  u(t)\not=e^t$.

It turns out that, to preserve the
mean dynamics, the usual product $u\xi$ must be replaced by
the Wick product $u\diamond \xi$. In particular, the solution of
\begin{equation}
\label{intr4}
u(t)=1+\int_0^t u(s)ds+\int_0^t u(s)\diamond \xi ds
\end{equation}
is $u(t)=e^te^{\xi t- (t^2/2)}$ and satisfies $\mbE  u(t)=e^t$.
For partial differential equations of this
type, such as
\begin{equation}
\label{intr5}
u(t,x)=u_0(x)+\int_0^tu_{xx}(s,x)ds+\int_0^tu_x(s,x)\diamond \xi ds,
\end{equation}
inequality  \eqref{intr1}, without additional assumptions on $u_0$,
  holds only for sufficiently small $t$.
Still, the same framework that unifies equations
\eqref{intr0}, \eqref{intr2}, \eqref{intr3},
also covers \eqref{intr4}, \eqref{intr5},  and a
wide class of other equations with space and space-time
 Gaussian noise.

 There are  three main components in the general construction
 presented in this paper:
 \begin{enumerate}
 \item Use of the Skorokhod integral $\boldsymbol{\delta}$
  for all types of random perturbation.
 \item Separation of the time as the variable in the equation
 from the possible time evolution of the noise.
 \item Replacement of the space $L_2(\Omega; X)$ of square-integrable
 $X$-valued processes with a space $(\cL)_{Q,r}(X)$ of
 generalized (and not necessarily adapted)
  random elements as the space for both the input
 data and the solution.
\end{enumerate}

Section \ref{sec2} presents the necessary definitions and constructions
related to the Skorokhod integral with respect to an abstract Gaussian
white noise and Section \ref{sec3} introduces the spaces
$(\cL)_{Q,r}$. Section \ref{sec4} presents the unified treatment
of stochastic evolution equations.
While the main results, Theorem \ref{th:main} cannot
be fully understood without all the preliminaries, there is
a certain analogy with the corresponding result for the
deterministic equations. Namely, let $(V,H,V')$ be a
normal triple of Hilbert spaces (defined precisely in
Section \ref{sec4}) and let $\bA(t)$, $t\in [0,T]$ be a
family of  bounded linear operators from $V$ to $V'$.
It is known from deterministic analysis, that
if  the family of operators $\bA(t)$ is uniformly elliptic,
then the initial value problem for the equation
$u(t)=u_0+\int_0^t (\bA(s)u(s)+f(s))ds$ is well-posed in the
normal triple $(V,H,V')$.

Now consider the initial value problem for the stochastic equation
\begin{equation}
\label{intr11}
u(t)=u_0+\int_0^t(\bA(s)u(s)+f(s))ds+\int_0^t\boldsymbol{\delta}
(\bM(s)u(s))ds.
\end{equation}
 The statement of Theorem \ref{th:main} is
essentially as follows: if the family of operators
$\bA(t)$ is uniformly elliptic and $\bM(t)$ is a
family of continuous operators from $V$ to $V'$, then
the initial value problem \eqref{intr11} is well
posed in the normal triple
$((\cL)_{Q,r}(V),(\cL)_{Q,r}(H), (\cL)_{Q,r}(V'))$.
 The reader should not miss Example \ref{example:main}
 illustrating how many familiar equations are particular cases
 of \eqref{intr11}.

\section{Skorokhod Integral With Respect to Gaussian White Noise}

\label{sec2}

Let ${\mathbb{F}}=(\Omega,{\mathcal{F}},\mathbb{P})$ be a complete
probability space, and ${\mathcal{U}}$, a real separable Hilbert
space with inner product $(\cdot,\cdot)_{{\mathcal{U}}}$. On
${\mathbb{F}}$, consider a zero-mean Gaussian family
\[
{\dot{W}}=\left\{  {\dot{W}}(h),\ h\in{\mathcal{U}}\right\}
\]
so that
\[
{\mathbb{E}}\left(  {\dot{W}}(h_{1})\;{\dot{W}}(h_{2})\right)  =(h_{1}%
,h_{2})_{{\mathcal{U}}}.
\]
It suffices, for our purposes, to assume that ${\mathcal{F}}$ is the
$\sigma $-algebra generated by ${\dot{W}(h), \, h\in \mathcal{U}}$.
Given a real separable Hilbert space $X$, we denote by
$L_{2}({\mathbb{F}};X)$ the Hilbert space of square-integrable
${\mathcal{F}}$-measurable $X$-valued random elements $f$. In
particular,
\[
(f,g)_{L_{2}({\mathbb{F}};X)}^{2}:={\mathbb{E}}(f,g)_{X}^{2}.
\]
When $X={\mathbb{R}}$, we write $L_{2}({\mathbb{F}})$ instead of
$L_{2}({\mathbb{F}};{\mathbb{R}})$.

\begin{definition}
\label{def:well} A formal series
\begin{equation}
\dot{W}=\sum_{k\geq 1}\dot{W}({\mathfrak{u}}_{k})\, {\mathfrak{u}}_{k} ,
\label{eq:wn}%
\end{equation}
where $\left\{  {\mathfrak{u}}_{k},k\geq1\right\}  $ is a complete
orthonormal basis in $\mathcal{U}$,\ is called Gaussian
\textbf{white noise} on $\mathcal{U}.$
\end{definition}

\begin{example}
If $\cU=L_2((0,T))$, then $\dot{W}(h)=\int_0^T h(s)dw(s)$,
where $w(t)=\dot{W}(\chi_{{}_t})$ is the standard Brownian motion;
 $\chi_{{}_t}$ is the characteristic function of the interval
 $[0,t]$.
 \end{example}

Given an orthonormal basis
${\mathfrak{U}}=\{{\mathfrak{u}}_{k},k\geq1\}$ in ${\mathcal{U}}$,
define a collection $\{\xi_{k},k\geq1\}$ of independent
standard Gaussian random variables by
$\xi_{k}={\dot{W}}({\mathfrak{u}%
}_{k})$. Let  ${\mathcal{J}}$ be the collection of multi-indices
$\alpha$ with $\alpha=(\alpha_{1},\alpha_{2},\ldots)$ so that each
$\alpha_{k}$ is a non-negative integer and
$|\alpha|:=\sum_{k\geq1}\alpha_{k}<\infty$. For
$\alpha,\beta\in{\mathcal{J}}$, we define
\[
\alpha+\beta=(\alpha_{1}+\beta_{1},\alpha_{2}+\beta_{2},\ldots),\quad
\alpha!=\prod_{k\geq1}\alpha_{k}!.
\]
By $(0)$ we denote the multi-index with all zeroes. By
$\varepsilon_{i}$ we denote the multi-index $\alpha$ with
$\alpha_{i}=1$ and $\alpha_{j}=0$ for $j\not =i$. With this
notation, $n\varepsilon_{i}$ is the multi-index $\alpha$ with
$\alpha_{i}=n$ and $\alpha_{j}=0$ for $j\not =i$.

Define the collection of random variables $\Xi=\{\xi_{\alpha},
\alpha \in{\mathcal{J}}\}$ as follows:
\begin{equation}
\label{eq:basis}\xi_{\alpha} = \prod_{k} \left(
\frac{H_{\alpha_{k}}(\xi _{k})}{\sqrt{\alpha_{k}!}} \right)  ,
\end{equation}
where
\begin{equation}
\label{eq:hermite}
H_{n}(x) =
(-1)^{n} e^{x^{2}/2}\frac{d^{n}}{dx^{n}}%
e^{-x^{2}/2}%
\end{equation}
is Hermite polynomial of order $n$.

\begin{theorem}[Cameron and Martin \cite{CM}]
\label{th:CM}  The collection
$\displaystyle\Xi =\{\xi_{\alpha},\ \alpha\ \in{\mathcal{J}}\}$ is
an orthonormal basis in
$\displaystyle L_{2}({\mathbb{F}};X)$: if $\displaystyle\eta\in L_{2}%
({\mathbb{F}};X)$ and
$\displaystyle\eta_{\alpha}={\mathbb{E}}(\eta\xi_{\alpha })$, then
$\displaystyle\eta=
\sum_{\alpha\in{\mathcal{J}}}\eta_{\alpha}%
\xi_{\alpha}$ and ${\mathbb{E}}|\eta|^{2}
=\sum_{\alpha\in{\mathcal{J}}}%
\eta_{\alpha}^{2}.$
\end{theorem}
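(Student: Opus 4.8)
The plan is to establish orthonormality and completeness in the scalar case $X=\mbR$ first, and then transfer to general $X$ by a tensoring argument, following essentially the classical reasoning of \cite{CM}. \emph{Orthonormality.} From the Rodrigues formula \eqref{eq:hermite}, repeated integration by parts yields the one-variable identity $\mbE\big(H_m(\xi_k)H_n(\xi_k)\big)=n!\,\delta_{mn}$ for a single standard Gaussian $\xi_k$, so each normalized factor $H_{\alpha_k}(\xi_k)/\sqrt{\alpha_k!}$ appearing in \eqref{eq:basis} has unit $L_2$-norm. Since the $\xi_k$ are independent, the expectation of the product over $k$ factors into the product of the individual expectations, and hence $\mbE(\xi_\alpha\xi_\beta)=\prod_k\delta_{\alpha_k\beta_k}=\delta_{\alpha\beta}$.

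\emph{Completeness.} This is the substantive step. Because $\cF$ is generated by $\{\dW(h),\,h\in\cU\}$ and $\dW(h)=\sum_k (h,\mfku_k)_{\cU}\,\xi_k$, we have $\cF=\sigma(\xi_k,\,k\geq1)$, and a martingale-convergence argument shows that $\bigcup_{n\geq1}L_2\big(\sigma(\xi_1,\dots,\xi_n)\big)$ is dense in $L_2(\mbF)$. It therefore suffices to show, for each fixed $n$, that the finite products $\prod_{k\leq n}H_{\alpha_k}(\xi_k)$ span a dense subspace; under the Gaussian isomorphism $L_2\big(\sigma(\xi_1,\dots,\xi_n)\big)\cong L_2(\mbR^n,\gamma_n)$ this reduces, by the product structure of $\gamma_n$, to the one-dimensional claim that polynomials are dense in $L_2(\mbR,\gamma_1)$. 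For the latter I would argue that if $f\in L_2(\mbR,\gamma_1)$ is orthogonal to every monomial, then the map $z\mapsto\int_{\mbR} f(x)\,e^{zx}\,\gamma_1(dx)$ is entire (the Gaussian density controls the exponential growth, so one may differentiate under the integral) and has all derivatives at the origin equal to zero, hence vanishes identically; restricting to $z=it$ shows that the Fourier transform of $f(x)e^{-x^2/2}$ is zero, so $f=0$ almost everywhere. I expect this density of polynomials in $L_2(\mbR,\gamma_1)$ to be the main obstacle, since it is the only place where a genuine analytic argument is needed rather than bookkeeping.

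\emph{Extension to $X$-valued elements and Parseval.} Identifying $L_2(\mbF;X)$ with the Hilbert-space tensor product $L_2(\mbF)\otimes X$, the orthonormal basis $\{\xi_\alpha\}$ of the scalar factor yields, for every $\eta\in L_2(\mbF;X)$, the expansion $\eta=\sum_{\alpha\in\cJ}\eta_\alpha\,\xi_\alpha$ with $X$-valued coefficients $\eta_\alpha=\mbE(\eta\,\xi_\alpha)$, together with Parseval's identity $\mbE|\eta|_X^2=\sum_{\alpha\in\cJ}|\eta_\alpha|_X^2$. Everything in this last step is standard Hilbert-space theory once the scalar completeness is in hand.
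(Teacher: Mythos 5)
Your proof is correct: the orthonormality computation, the reduction of completeness to the density of polynomials in $L_2(\mathbb{R},\gamma_1)$ via martingale convergence and the tensor-product structure of $L_2(\sigma(\xi_1,\dots,\xi_n))$, the analytic-continuation argument for that density, and the final passage to $X$-valued elements through $L_2(\mathbb{F};X)\cong L_2(\mathbb{F})\otimes X$ are all sound and constitute the classical proof. The paper itself gives no proof of this statement, citing Cameron and Martin instead, so there is no internal argument to compare against; your write-up supplies exactly the standard argument that the citation points to.
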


Denote by ${\mathbf{D}}$ the \emph{Malliavin derivative} on $L_{2}%
({\mathbb{F}})$ (see e.g. \cite{Nualart}). In particular,
 if $F:{\mathbb{R}%
}^{N}\rightarrow{\mathbb{R}}$ is a smooth function and
 $h_{i}\in{\mathcal{U}%
},\ i=1,\ldots N$, then
\begin{equation}
{\mathbf{D}}F({\dot{W}}(h_{1}),\ldots{\dot{W}}(h_{N}))=\sum_{i=1}^{N}%
\frac{\partial F}{\partial x_{i}}({\dot{W}}(h_{1}),\ldots,{\dot{W}}%
(h_{N}))h_{i}\in L_{2}({\mathbb{F}};{\mathcal{U}}). \label{eq:MD}%
\end{equation}
It is known \cite{Nualart} that the domain
${\mathbb{D}}^{1,2}({\mathbb{F}})$
of the operator ${\mathbf{D}}$ is a dense linear subspace of $L_{2}%
({\mathbb{F}})$.

The adjoint of the Malliavin derivative on $L_2(\mathbb{F})$ is the
{\tt Skorokhod integral} and is traditionally denoted by
$\boldsymbol{\delta}$
\cite{Nualart}. The operator $\boldsymbol{\delta}$
extends to a subspace ${\mathbb{D}}^{1,2}({\mathbb{F}};X\otimes \mathcal{U})$
 of  $L_2(\mathbb{F}; X\otimes \mathcal{U})$
for a Hilbert space $X$: given
$f\in {\mathbb{D}}^{1,2}({\mathbb{F}};X\otimes \mathcal{U})$,
$\boldsymbol{\delta}(f)$ is the unique element of $L_2(\mathbb{F};X)$
with the property
\begin{equation}
\mathbb{E}(\varphi{\boldsymbol{\delta}}(f))=
{\mathbb{E}}(f,{\mathbf{D}}\varphi)_{{\mathcal{U}}}
\label{eq:SkI}%
\end{equation}
for all $\varphi\in {\mathbb{D}}^{1,2}({\mathbb{F}})$.

We will now derive the expressions for the operators $\mathbf{D}$
and  $\boldsymbol{\delta}$  in the basis $\Xi.$
 To begin, let us  compute ${\mathbf{D}}(\xi_{\alpha}).$

\begin{proposition}
\label{prop:Dxi} For each $\alpha\in{\mathcal{J}},$
\begin{equation}
{\mathbf{D}}(\xi_{\alpha})
=\sum_{k\geq1}\sqrt{\alpha_{k}}\,\xi_{\alpha
-\varepsilon_{_{k}}}{\mathfrak{u}}_{k}. \label{eq:Dxi}%
\end{equation}

\end{proposition}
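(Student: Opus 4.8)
The plan is to reduce everything to the defining formula \eqref{eq:MD} for the Malliavin derivative together with the classical differentiation identity for Hermite polynomials. First I would note that, since $|\alpha|<\infty$, only finitely many of the indices $\alpha_{k}$ are nonzero, so $\xi_{\alpha}$ is genuinely a smooth (indeed polynomial) function $F$ of the finite collection $\{\xi_{k}=\dot W(\mathfrak{u}_{k}):\alpha_{k}\neq 0\}$; for the remaining indices the factor is $H_{0}\equiv 1$ and may be discarded. In particular $\xi_{\alpha}\in\mathbb{D}^{1,2}(\mathbb{F})$ and formula \eqref{eq:MD} applies directly, with no approximation argument needed.

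Next I would record the identity $H_{n}'(x)=n\,H_{n-1}(x)$ for the Hermite polynomials of \eqref{eq:hermite}. This follows, for example, by differentiating the generating-function relation $e^{tx-t^{2}/2}=\sum_{n\geq 0}H_{n}(x)\,t^{n}/n!$ in $x$ and matching powers of $t$, or by induction from the Rodrigues formula \eqref{eq:hermite} itself. This is the only analytic fact about the basis that is actually used.

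With these two ingredients I would apply \eqref{eq:MD} to $F$ via the product rule: differentiating $\xi_{\alpha}=\prod_{j}H_{\alpha_{j}}(\xi_{j})/\sqrt{\alpha_{j}!}$ in the $k$-th variable leaves the factors $j\neq k$ untouched and replaces the $k$-th factor by its derivative, and $H_{\alpha_{k}}'(\xi_{k})=\alpha_{k}H_{\alpha_{k}-1}(\xi_{k})$ gives
\[
\mathbf{D}\xi_{\alpha}=\sum_{k\geq 1}\Bigl(\prod_{j\neq k}\frac{H_{\alpha_{j}}(\xi_{j})}{\sqrt{\alpha_{j}!}}\Bigr)\,\frac{\alpha_{k}\,H_{\alpha_{k}-1}(\xi_{k})}{\sqrt{\alpha_{k}!}}\,\mathfrak{u}_{k}.
\]

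Finally I would simplify the scalar coefficient. Because $\alpha_{k}!=\alpha_{k}(\alpha_{k}-1)!$, one has $\alpha_{k}/\sqrt{\alpha_{k}!}=\sqrt{\alpha_{k}}/\sqrt{(\alpha_{k}-1)!}$, so the $k$-th summand is exactly $\sqrt{\alpha_{k}}\,\xi_{\alpha-\varepsilon_{k}}\,\mathfrak{u}_{k}$ once the remaining product of normalized Hermite polynomials is recognized as $\xi_{\alpha-\varepsilon_{k}}$. Terms with $\alpha_{k}=0$ vanish automatically thanks to the factor $\sqrt{\alpha_{k}}$, which is consistent with $\alpha-\varepsilon_{k}$ being undefined there, and summing over $k$ yields \eqref{eq:Dxi}. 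I do not anticipate a genuine obstacle here: the only point requiring care is the bookkeeping of the normalizing constants and the (legitimate, since the product is finite) interchange of $\mathbf{D}$ with the product, both of which are routine given that $\xi_{\alpha}$ is a polynomial in finitely many Gaussian variables.
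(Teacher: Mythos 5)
Your proof is correct and follows essentially the same route as the paper's: the paper's own (one-line) proof cites exactly the two ingredients you use, namely the chain-rule formula \eqref{eq:MD} for the Malliavin derivative applied to the finite product defining $\xi_{\alpha}$ and the identity $H_{n}'(x)=nH_{n-1}(x)$. Your write-up merely fills in the routine bookkeeping of the normalizing constants, which the paper leaves implicit.
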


\textbf{Proof.} The result follows by direct computation using the
property (\ref{eq:MD}) of the Malliavin derivative and the relation
$H_{n}^{\prime }(x)=nH_{n-1}(x)$ for the Hermite polynomials (cf.
\cite[Chapter 1]{Nualart}).
\endproof

\begin{remark}
The set $\mathcal{J}$ is not closed under
substraction and  the expression $\alpha-\varepsilon_{k}$
is undefined if $\alpha_{k}=0$. Everywhere in this paper when
undefined expressions of this type appear, we use the following
convention: if $\alpha_{k}=0,$ then $\sqrt{\alpha_{k}}\,\xi_{\alpha
-\varepsilon_{k}}=0$.
\end{remark}

\begin{proposition}
\label{prop:Ixi} For $\xi_{\alpha}\in\Xi$, $h\in X$,
and ${\mathfrak{u}}_{k}\in{\mathfrak{U}}$,
\begin{equation}
{\boldsymbol{\delta}}(\xi_{\alpha}\, h\otimes{\mathfrak{u}}_{k})
 =h\,\sqrt{\alpha_{k}+1}\,\xi_{\alpha+\varepsilon_{k}}.
\label{eq:Ixi}%
\end{equation}

\end{proposition}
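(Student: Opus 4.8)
The plan is to use the defining adjoint property \eqref{eq:SkI} of the Skorokhod integral and to identify $\boldsymbol{\delta}(f)$ through its Cameron--Martin coefficients, where $f=\xi_{\alpha}\,h\otimes\mathfrak{u}_{k}$. Since $f$ is the product of the smooth random variable $\xi_{\alpha}$ with the fixed element $h\otimes\mathfrak{u}_{k}$ of $X\otimes\mathcal{U}$, it lies in the domain $\mathbb{D}^{1,2}(\mathbb{F};X\otimes\mathcal{U})$, so $\boldsymbol{\delta}(f)$ is a genuine element of $L_{2}(\mathbb{F};X)$. By Theorem \ref{th:CM}, $\boldsymbol{\delta}(f)$ is determined by the coefficients $\mathbb{E}(\xi_{\beta}\,\boldsymbol{\delta}(f))$, $\beta\in\mathcal{J}$; and because every $\xi_{\beta}\in\mathbb{D}^{1,2}(\mathbb{F})$ is an admissible test function, \eqref{eq:SkI} gives $\mathbb{E}(\xi_{\beta}\,\boldsymbol{\delta}(f))=\mathbb{E}(f,\mathbf{D}\xi_{\beta})_{\mathcal{U}}$. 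So the whole proof reduces to computing the right-hand side for each $\beta$ and recognizing the answer as the expansion of $h\sqrt{\alpha_{k}+1}\,\xi_{\alpha+\varepsilon_{k}}$.

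For the computation I would substitute the formula \eqref{eq:Dxi} from Proposition \ref{prop:Dxi}, $\mathbf{D}\xi_{\beta}=\sum_{j\geq1}\sqrt{\beta_{j}}\,\xi_{\beta-\varepsilon_{j}}\,\mathfrak{u}_{j}$, and take the partial inner product over $\mathcal{U}$. Orthonormality of $\{\mathfrak{u}_{j}\}$ collapses the sum to the single index $j=k$, yielding $(f,\mathbf{D}\xi_{\beta})_{\mathcal{U}}=h\,\xi_{\alpha}\,\sqrt{\beta_{k}}\,\xi_{\beta-\varepsilon_{k}}$ (with the convention of the Remark applied when $\beta_{k}=0$, in which case this term vanishes). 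Taking expectations and invoking the orthonormality $\mathbb{E}(\xi_{\alpha}\xi_{\beta-\varepsilon_{k}})=\delta_{\alpha,\,\beta-\varepsilon_{k}}$ from Theorem \ref{th:CM} gives $\mathbb{E}(f,\mathbf{D}\xi_{\beta})_{\mathcal{U}}=h\sqrt{\alpha_{k}+1}$ exactly when $\beta=\alpha+\varepsilon_{k}$ (so that $\beta_{k}=\alpha_{k}+1$), and $0$ for all other $\beta$.

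These are precisely the Cameron--Martin coefficients of $h\sqrt{\alpha_{k}+1}\,\xi_{\alpha+\varepsilon_{k}}$, so Theorem \ref{th:CM} forces $\boldsymbol{\delta}(f)=h\sqrt{\alpha_{k}+1}\,\xi_{\alpha+\varepsilon_{k}}$, which is \eqref{eq:Ixi}. The arithmetic here is routine; the steps I would write most carefully are the two structural points, namely that $f$ lies in the domain of $\boldsymbol{\delta}$ (so that the coefficients are legitimately read off via \eqref{eq:SkI}) and that the relation need only be tested against the basis $\Xi$. As a cross-check, one may bypass the adjoint entirely through the product rule $\boldsymbol{\delta}(\xi_{\alpha}\,\mathfrak{u}_{k})=\xi_{\alpha}\dot{W}(\mathfrak{u}_{k})-(\mathbf{D}\xi_{\alpha},\mathfrak{u}_{k})_{\mathcal{U}}$ combined with the Hermite recursion $xH_{n}(x)=H_{n+1}(x)+nH_{n-1}(x)$ to expand $\xi_{\alpha}\xi_{k}$; the $\xi_{\alpha-\varepsilon_{k}}$ contributions then cancel and leave $\sqrt{\alpha_{k}+1}\,\xi_{\alpha+\varepsilon_{k}}$, recovering the same formula.
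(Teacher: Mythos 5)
Your proposal is correct and follows essentially the same route as the paper: verify the adjoint identity \eqref{eq:SkI} against the basis elements $\varphi=\xi_{\beta}$, substitute \eqref{eq:Dxi}, and use orthonormality of $\Xi$ to identify the resulting coefficients with those of $h\sqrt{\alpha_{k}+1}\,\xi_{\alpha+\varepsilon_{k}}$. The paper's proof is a terser version of the same computation (your added remarks on the domain of $\boldsymbol{\delta}$ and the product-rule cross-check are fine but not needed).
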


\textbf{Proof.} It is enough to verify (\ref{eq:SkI}) with
$f=h\otimes {\mathfrak{u}}_{k}\,\xi_{\alpha}$ and
$\varphi=\xi_{\beta}$, where $h\in X$. By (\ref{eq:Dxi}),
\[
{\mathbb{E}}(f,{\mathbf{D}}\varphi)_{{\mathcal{U}}}=
\sqrt{\beta_{k}}\, h\,
{\mathbb{E}}(\xi_{\alpha}\xi_{\beta-\varepsilon_{k}}) =
\begin{cases}
\sqrt{\alpha_{k} +1}\, h, & \mathrm{if}\ \alpha=\beta-\varepsilon_{k},\\
0, & \mathrm{if } \ \alpha\not =\beta-\varepsilon_{k}.
\end{cases}
\]
In other words,
\[
{\mathbb{E}}(\xi_{\alpha}\,
h\otimes{\mathfrak{u}}_{k},{\mathbf{D}}\xi_{\beta
})_{{\mathcal{U}}}=h\,{\mathbb{E}}(\sqrt{\alpha_{k}+1}\xi_{\alpha
+\varepsilon_{k}}\xi_{\beta})
\]
for all $\beta\in{\mathcal{J}}$. \endproof

\begin{remark} The operator $\boldsymbol{\delta}\mathbf{D}$
is linear and unbounded on $L_2(\mathbb{F})$;
 it follows from Propositions
\ref{prop:Dxi} and \ref{prop:Ixi} that the random variables
$\xi_{\alpha}$ are eigenfunctions of this operator:
\begin{equation}
\label{eq:eig-xi} \boldsymbol{\delta}(\mathbf{D}(\xi_{\alpha}))
=|\alpha|\xi_{\alpha}.
\end{equation}
\end{remark}

To give an alternative characterization of the operator
 ${\boldsymbol{\delta}}$,
we define a new operation on the elements of $\Xi$.

\begin{definition}
\label{def:WP} For $\xi_{\alpha}$, $\xi_{\beta}$ from $\Xi$, define
the Wick product
\begin{equation}
\label{eq:WP}
\xi_{\alpha}\diamond\xi_{\beta}:=\sqrt{\left(  \frac
{(\alpha+\beta)!}{\alpha!\beta!} \right)  } \xi_{\alpha+\beta}.
\end{equation}

\end{definition}

In particular, taking in (\ref{def:WP}) $\alpha=k\varepsilon_{i}$
and $\beta=n\varepsilon_{i}$, and using (\ref{eq:basis}), we get
\begin{equation}
\label{eq:WP-hp}H_{k}(\xi_{i})\diamond
H_{n}(\xi_{i})=H_{k+n}(\xi_{i}).
\end{equation}
An immediate consequence of Proposition \ref{prop:Ixi} and
Definition \ref{def:WP} is the following identity:
\begin{equation}
\label{eq:IWP}{\boldsymbol{\delta}}
(\xi_{\alpha}h\otimes{\mathfrak{u}}_{k}%
)=h\xi_{\alpha}\diamond\xi_{k}, \ h\in X.
\end{equation}

More generally, we define the operation $\diamond$ on formal
series:
 $$
 \left(\sum_{\alpha\in \cJ} f_{\alpha}\xi_{\alpha}\right)
 \diamond \left(\sum_{\alpha\in \cJ} g_{\alpha} \xi_{\alpha}
 \right)=
 \sum_{\alpha\in \cJ} \left(
 \sum_{\beta,\gamma\in \cJ:\beta+\gamma=\alpha}
 \sqrt{\frac{\alpha!}{\beta!\gamma!}}f_{\beta}g_{\gamma}
 \right)\xi_{\alpha}
 $$
 for $f_{\alpha}\in X$, $g_{\alpha}\in \mbR$.

 \begin{theorem}
\label{th:dual}
If
$f=\sum_{k\geq1}f_{k}\otimes{\mathfrak{u}%
}_{k}$,
$f_{k}=\sum_{\alpha\in{\mathcal{J}}}f_{k,\alpha}\,\xi_{\alpha}%
\in{\mathcal{R}}L_{2}({\mathbb{F}};X)$, and
$f$ is in the domain of $\boldsymbol{\delta}$, then
\begin{equation}
{\boldsymbol{\delta}}(f)=
\sum_{k\geq1}f_{k}\diamond\xi_{k}, \label{eq:IWPG}%
\end{equation}
and
\begin{equation}
({\boldsymbol{\delta}}(f))_{\alpha}=
\sum_{k\geq1}\sqrt{\alpha_{k}}f_{k,\alpha
-\varepsilon_{k}}.
\label{eq:IBasG}%
\end{equation}

\end{theorem}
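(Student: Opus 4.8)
The plan is to establish the coefficient formula \eqref{eq:IBasG} first and then read off \eqref{eq:IWPG} by translating back into Wick-product notation. Rather than applying $\boldsymbol{\delta}$ term by term to the (possibly infinite) expansion of $f$, I would extract each chaos coefficient of $\boldsymbol{\delta}(f)$ directly from the defining duality \eqref{eq:SkI}; this sidesteps any question of commuting the unbounded operator $\boldsymbol{\delta}$ with an infinite summation. Concretely, since each $\xi_\gamma$ is a finite smooth polynomial in the $\xi_k$ and hence lies in $\mbD^{1,2}(\mbF)$, I may take $\varphi=\xi_\gamma$ in \eqref{eq:SkI}, so that by Theorem \ref{th:CM} the $\gamma$-th coefficient of $\boldsymbol{\delta}(f)$ is
\[
(\boldsymbol{\delta}(f))_\gamma=\mbE\big(\xi_\gamma\,\boldsymbol{\delta}(f)\big)=\mbE(f,\bD\xi_\gamma)_{\cU}.
\]

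Next I would evaluate the right-hand side. By Proposition \ref{prop:Dxi}, $\bD\xi_\gamma=\sum_{k\geq1}\sqrt{\gamma_k}\,\xi_{\gamma-\varepsilon_k}\,\mathfrak{u}_k$, which is a \emph{finite} sum because $\gamma$ has finite support. Pairing this with $f=\sum_{j\geq1}f_j\otimes\mathfrak{u}_j$ in $\cU$ and using $(\mathfrak{u}_j,\mathfrak{u}_k)_{\cU}=\delta_{jk}$ collapses the double sum to $(f,\bD\xi_\gamma)_{\cU}=\sum_{k\geq1}\sqrt{\gamma_k}\,\xi_{\gamma-\varepsilon_k}\,f_k$. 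Taking expectations, recalling $f_k=\sum_\alpha f_{k,\alpha}\xi_\alpha$ and using the orthonormality $\mbE(\xi_\alpha\xi_{\gamma-\varepsilon_k})=\delta_{\alpha,\gamma-\varepsilon_k}$ from Theorem \ref{th:CM}, yields $\mbE(f,\bD\xi_\gamma)_{\cU}=\sum_{k\geq1}\sqrt{\gamma_k}\,f_{k,\gamma-\varepsilon_k}$, which is precisely \eqref{eq:IBasG}. Here the convention $\sqrt{\gamma_k}\,\xi_{\gamma-\varepsilon_k}=0$ when $\gamma_k=0$ silently discards the undefined terms.

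Finally, to recover \eqref{eq:IWPG}, I would verify that the coefficient just computed is exactly the $\gamma$-th coefficient of $\sum_{k\geq1}f_k\diamond\xi_k$. Using $\xi_k=\xi_{\varepsilon_k}$ and Definition \ref{def:WP}, one has $\xi_\alpha\diamond\xi_{\varepsilon_k}=\sqrt{(\alpha+\varepsilon_k)!/\alpha!}\,\xi_{\alpha+\varepsilon_k}=\sqrt{\alpha_k+1}\,\xi_{\alpha+\varepsilon_k}$, so by the definition of $\diamond$ on formal series $f_k\diamond\xi_k=\sum_\alpha f_{k,\alpha}\sqrt{\alpha_k+1}\,\xi_{\alpha+\varepsilon_k}$; reindexing with $\gamma=\alpha+\varepsilon_k$ gives the $\gamma$-coefficient of $\sum_k f_k\diamond\xi_k$ as $\sum_k\sqrt{\gamma_k}\,f_{k,\gamma-\varepsilon_k}$, matching \eqref{eq:IBasG}. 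Equivalently, \eqref{eq:IWPG} follows immediately from \eqref{eq:IWP} and linearity once this identification of coefficients is in hand.

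The main obstacle I anticipate is purely the justification of interchanging $\boldsymbol{\delta}$ with the infinite summations defining $f$ and the Wick product, a genuine concern since $\boldsymbol{\delta}$ is unbounded. The duality route above circumvents this entirely: each identity is checked coefficient-by-coefficient against the fixed test element $\xi_\gamma$, and for every fixed $\gamma$ all sums that actually occur are finite because $\gamma$ has finite support. The hypothesis that $f$ lies in the domain of $\boldsymbol{\delta}$ is exactly what guarantees $\boldsymbol{\delta}(f)\in L_2(\mbF;X)$, so that its coefficients $(\boldsymbol{\delta}(f))_\gamma$ are well-defined and square-summable in the first place.
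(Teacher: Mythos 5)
Your proof is correct, and it takes a genuinely different (and in one respect more careful) route than the paper's. The paper proves \eqref{eq:IWPG} first: it expands $f=\sum_{k}\sum_{\alpha}f_{k,\alpha}\,\xi_{\alpha}\otimes\mathfrak{u}_{k}$, applies $\boldsymbol{\delta}$ term by term ``by linearity'' using Proposition \ref{prop:Ixi}, and then gets \eqref{eq:IBasG} by the reindexing $\alpha\mapsto\alpha-\varepsilon_{k}$. That argument tacitly commutes the unbounded operator $\boldsymbol{\delta}$ with an infinite sum --- precisely the issue you identify. You instead extract each coefficient $({\boldsymbol{\delta}}(f))_{\gamma}=\mathbb{E}(\xi_{\gamma}{\boldsymbol{\delta}}(f))=\mathbb{E}(f,\mathbf{D}\xi_{\gamma})_{\mathcal{U}}$ from the defining duality \eqref{eq:SkI} with $\varphi=\xi_{\gamma}$, and use Proposition \ref{prop:Dxi}; since $\mathbf{D}\xi_{\gamma}$ has only finitely many nonzero terms, every sum you manipulate is finite, and the hypothesis $f\in\mathrm{dom}(\boldsymbol{\delta})$ is used exactly where it is needed, namely to guarantee that $\boldsymbol{\delta}(f)\in L_{2}(\mathbb{F};X)$ has well-defined Cameron--Martin coefficients. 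In effect you have globalized the duality computation that the paper itself uses to prove the single-term Proposition \ref{prop:Ixi}. The trade-off: the paper's route is shorter and makes the Wick-product identity \eqref{eq:IWPG} primary, with \eqref{eq:IBasG} falling out by reindexing; yours establishes \eqref{eq:IBasG} first on firmer ground and then recovers \eqref{eq:IWPG} as an identity of formal series via the (correct) computation $\xi_{\alpha}\diamond\xi_{\varepsilon_{k}}=\sqrt{\alpha_{k}+1}\,\xi_{\alpha+\varepsilon_{k}}$, at the small cost of that extra coefficient comparison at the end.
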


\begin{proof}
 By linearity and (\ref{eq:IWP}),
\[
{\boldsymbol{\delta}}(f)=\sum_{k\geq1}
\sum_{\alpha\in{\mathcal{J}}} {\boldsymbol{\delta}}%
(\xi_{\alpha} f_{k,\alpha}\otimes{\mathfrak{u}}_{k})= \sum_{k\geq1}
\sum_{\alpha\in{\mathcal{J}}}
f_{k,\alpha}\xi_{\alpha}\diamond\xi_{k}= \sum_{k\geq1}
f_{k}\diamond\xi_{k},
\]
which is (\ref{eq:IWPG}). On the other hand, by (\ref{eq:Ixi}),
\[
{\boldsymbol{\delta}}(f)=\sum_{k\geq1}
\sum_{\alpha\in{\mathcal{J}}} f_{k,\alpha}%
\sqrt{\alpha_{k}+1}\, \xi_{\alpha+\varepsilon_{k}}=
\sum_{k\geq1}\sum _{\alpha\in{\mathcal{J}}}
f_{k,\alpha-\varepsilon_{k}}\sqrt{\alpha_{k}}\, \xi_{\alpha},
\]
and (\ref{eq:IBasG}) follows.
\end{proof}

To proceed, we need a description of a multi-index $\alpha$
with $|\alpha|=n>0$ using its characteristic set $K_{\alpha}$, that
is, an ordered $n$-tuple $K_{\alpha }=\{k_{1},\ldots,k_{n}\}$, where
$k_{1}\leq k_{2}\leq\ldots\leq k_{n}$ characterize the locations and
the values of the non-zero elements of $\alpha $. More precisely,
$k_{1}$ is the index of the first non-zero element of $\alpha,$
followed by $\max\left( 0,\alpha_{k_{1}}-1\right)  $ of entries with
the same value. The next entry after that is the index of the second
non-zero element of $\alpha$, followed by $\max\left(  0,\alpha_{k_{2}%
}-1\right)  $ of entries with the same value, and so on.\ For
example, if $n=7$ and $\alpha=(1,0,2,0,0,1,0,3,0,\ldots)$, then the
non-zero elements of $\alpha$ are $\alpha_{1}=1$, $\alpha_{3}=2$,
$\alpha_{6}=1$, $\alpha_{8}=3$.
As a result, $K_{\alpha}=\{1,3,3,6,8,8,8\}$, that is, $k_{1}=1,\,k_{2}%
=k_{3}=3,\,k_{4}=6, k_{5}=k_{6}=k_{7}=8$. Note  also that, for every
sequence $(b_k,k\geq 1)$ of positive numbers,
\begin{equation}
\prod_{k\geq 1}b_k^{\alpha_k}= b_{i_1}\cdot b_{i_2}\cdot \ldots
\cdot b_{i_n}=\prod_{i\in K_{\alpha}} b_i,
\end{equation}
which can serve as  an equivalent definition of $K_{\alpha}$.

Using the notion of the characteristic set, we now state the
following analog of the well-known result of It\^{o} \cite{Ito}
connecting multiple Wiener integrals and Hermite polynomials.

\begin{proposition}
\label{rem:xial} Let $\alpha\in{\mathcal{J}}$ be a multi-index with
$|\alpha|=n\geq1$ and characteristic set $K_{\alpha}=\{k_{1},\ldots,
k_{n}\}$. Then
\begin{equation}
\label{xial-alt}
\xi_{\alpha}= \frac{\xi_{k_{1}}\diamond\xi_{k_{2}}%
\diamond\cdots\diamond\xi_{k_{n}}}{\sqrt{\alpha!}}.
\end{equation}

\end{proposition}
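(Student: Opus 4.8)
The plan is to reduce the statement to a general iterated-Wick-product formula together with a bookkeeping identity for the characteristic set. First I would record the elementary observation that the characteristic set $K_\alpha$ is nothing but an enumeration of $\alpha$ as a sum of unit multi-indices: since each index $k$ occurs in $K_\alpha$ exactly $\alpha_k$ times (this is precisely the content of the product identity used above as the equivalent definition of $K_\alpha$), one has
\[
\sum_{j=1}^{n}\varepsilon_{k_j}=\alpha, \qquad \text{and} \qquad \xi_{k_j}=\xi_{\varepsilon_{k_j}}.
\]
In particular each factor $\xi_{k_j}$ appearing in (\ref{xial-alt}) is the basis element indexed by the unit multi-index $\varepsilon_{k_j}$, for which $\varepsilon_{k_j}!=1$.

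Second, I would establish the multi-factor Wick formula
\[
\xi_{\beta^{(1)}}\diamond\cdots\diamond\xi_{\beta^{(m)}} = \sqrt{\frac{(\beta^{(1)}+\cdots+\beta^{(m)})!}{\beta^{(1)}!\cdots\beta^{(m)}!}}\;\xi_{\beta^{(1)}+\cdots+\beta^{(m)}}
\]
for arbitrary $\beta^{(1)},\ldots,\beta^{(m)}\in\mathcal{J}$, where the left-hand side is read with left-to-right bracketing. This is proved by induction on $m$: the case $m=2$ is Definition \ref{def:WP}, and in the inductive step one multiplies the $(m-1)$-fold product by $\xi_{\beta^{(m)}}$ via the binary definition and simplifies the resulting product of two square roots, the factor $(\beta^{(1)}+\cdots+\beta^{(m-1)})!$ cancelling between numerator and denominator. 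Since the right-hand side depends only on the multiset of factors, this formula simultaneously shows that the iterated product is associative and symmetric, so that the expression $\xi_{k_1}\diamond\cdots\diamond\xi_{k_n}$ in the statement is unambiguous.

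Finally, I would specialize the multi-factor formula to $\beta^{(j)}=\varepsilon_{k_j}$ and $m=n$. Each factorial $\varepsilon_{k_j}!$ equals $1$, and by the first step the sum $\sum_{j}\varepsilon_{k_j}$ equals $\alpha$, so the formula collapses to
\[
\xi_{k_1}\diamond\cdots\diamond\xi_{k_n}=\sqrt{\alpha!}\,\xi_\alpha,
\]
which is (\ref{xial-alt}) upon dividing by $\sqrt{\alpha!}$.

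I do not anticipate a genuine obstacle; the only points requiring care are organizational, namely reading the characteristic set correctly as the decomposition $\alpha=\sum_j\varepsilon_{k_j}$, and justifying that the iterated Wick product is well defined before dropping the brackets. An essentially equivalent alternative would avoid the general formula by grouping equal indices: repeated application of (\ref{eq:WP-hp}) turns the $\alpha_i$ copies of $\xi_{\varepsilon_i}$ into $H_{\alpha_i}(\xi_i)$, and since multi-indices with disjoint supports satisfy $(\beta+\gamma)!=\beta!\gamma!$ (whence $\xi_\beta\diamond\xi_\gamma=\xi_{\beta+\gamma}$), one recovers $\prod_i H_{\alpha_i}(\xi_i)=\sqrt{\alpha!}\,\xi_\alpha$ directly from (\ref{eq:basis}).
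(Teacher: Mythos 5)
Your argument is correct. Your primary route --- proving the general $m$-factor identity
\[
\xi_{\beta^{(1)}}\diamond\cdots\diamond\xi_{\beta^{(m)}}
=\sqrt{\frac{(\beta^{(1)}+\cdots+\beta^{(m)})!}{\beta^{(1)}!\cdots\beta^{(m)}!}}\,
\xi_{\beta^{(1)}+\cdots+\beta^{(m)}}
\]
by induction from Definition \ref{def:WP} and then specializing to the unit multi-indices $\varepsilon_{k_j}$ --- is a slightly more systematic packaging than what the paper does: the paper's proof is a one-liner that groups the $\alpha_i$ repeated factors $\xi_i$ into $H_{\alpha_i}(\xi_i)$ via \eqref{eq:WP-hp} and then invokes the defining product \eqref{eq:basis}; this is exactly the ``alternative'' you sketch in your last paragraph. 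What your general lemma buys is an explicit justification of the two points the paper leaves tacit, namely that the iterated Wick product is associative and order-independent (so the unbracketed expression in \eqref{xial-alt} is well defined) and that the cross terms between distinct indices contribute no factorial correction (since $(\beta+\gamma)!=\beta!\,\gamma!$ for disjointly supported $\beta,\gamma$). The cost is the small extra step of checking the telescoping cancellation of $(\beta^{(1)}+\cdots+\beta^{(m-1)})!$ in the induction, which you do correctly. Either route is fine; yours is marginally more self-contained, the paper's is shorter because it leans on the Hermite identity already recorded.
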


\textbf{Proof.} This follows from (\ref{eq:basis}) and
(\ref{eq:WP-hp}), because by (\ref{eq:WP-hp}), for every $i$ and
$k$,
\[
H_{k}(\xi_{i})=\underset{k\,\mathrm{times}}
{\underbrace{\xi_{i}\diamond
\cdots\diamond\xi_{i}}}.
\]
\endproof

 By induction, we define the operator
$\boldsymbol{\delta}^{\oh n}$, $n> 1$ on the space
$L_2(\mathbb{F}; X\otimes
\mathcal{U}^{\oh n})$, where $\mathcal{U}^{\oh n}$ is the symmetric
tensor power of $\mathcal{U}$. Then relation \eqref{eq:IWP} leads
to an alternative form of \eqref{xial-alt}:
\begin{equation}
\label{mult-int}
 \xi_{\alpha}=\frac{1}{|\alpha|!\,\sqrt{\alpha!}}\,
\boldsymbol{\delta}^{\oh |\alpha|}(E_{\alpha}),
\end{equation}
where
\begin{equation}
\label{Ealpha}
E_{\alpha}=\sum_{\sigma\in \mathcal{P}}
\mfku_{i_{\sigma(1)}}\otimes\cdots\otimes\mfku_{i_{\sigma(n)}},
\end{equation}
with summation in \eqref{Ealpha} taken over all permutations
$\mathcal{P}$ of the set $\{1,\ldots,n\}$. This leads to the
following generalization of the multiple Wiener integral
expansion for the elements of $L_2(\mathbb{F};X)$.

\begin{theorem}
\label{th:mult-int}
If $\eta\in L_2(\mathbb{F};X)$, then there is a unique
collection of the deterministic elements $\eta_k$, $k\geq 0$,
with $\eta_0=\mathbb{E}\eta\in X$ and
$\eta_k\in X\otimes \mathcal{U}^{\oh k}$, $k\geq 1$,
with the properties
\begin{equation}
\label{eq:mult-int}
\eta=\eta_0+\sum_{k\geq 1} \frac{1}{k!}\,
\boldsymbol{\delta}^{\oh k}(\eta_k),\
\ \ \ \mathbb{E}\|\eta\|_X^2= \|\eta_0\|_{X}^2+
\sum_{k\geq 1} \frac{1}{k!}
\|\, \|\eta_k\|_{\mathcal{U}^{\oh n}}\, \|^2_X.
\end{equation}
\end{theorem}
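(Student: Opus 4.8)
The plan is to prove Theorem \ref{th:mult-int} by reorganizing the Cameron-Martin expansion from Theorem \ref{th:CM} into a chaos expansion indexed by total order $k = |\alpha|$, and to identify the resulting coefficients with the symmetrized tensors $\eta_k$ via the multiple-integral representation \eqref{mult-int}. First I would start from the guaranteed expansion $\eta = \sum_{\alpha \in \mathcal{J}} \eta_\alpha \xi_\alpha$ with $\eta_\alpha = \mathbb{E}(\eta \xi_\alpha) \in X$ and the Parseval identity $\mathbb{E}\|\eta\|_X^2 = \sum_\alpha \|\eta_\alpha\|_X^2$. Using \eqref{mult-int}, each basis element with $|\alpha| = k$ can be written as $\xi_\alpha = \frac{1}{k!\sqrt{\alpha!}}\,\boldsymbol{\delta}^{\hat\otimes k}(E_\alpha)$, so grouping the sum by $|\alpha| = k$ suggests defining
\begin{equation}
\label{eq:etak-def}
\eta_k = \frac{1}{\sqrt{k!}} \sum_{|\alpha| = k} \frac{\eta_\alpha}{\sqrt{\alpha!}}\, E_\alpha \in X \otimes \mathcal{U}^{\hat\otimes k},
\end{equation}
and then verifying that $\eta = \eta_0 + \sum_{k \geq 1} \frac{1}{k!}\,\boldsymbol{\delta}^{\hat\otimes k}(\eta_k)$ reproduces the original series by linearity of $\boldsymbol{\delta}^{\hat\otimes k}$.

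The core computation is the norm identity. The key fact I would establish is that the tensors $\{E_\alpha : |\alpha| = k\}$ are mutually orthogonal in $\mathcal{U}^{\hat\otimes k}$ and that $\|E_\alpha\|_{\mathcal{U}^{\hat\otimes k}}^2 = k!\,\alpha!$. This follows from \eqref{Ealpha}: $E_\alpha$ is a sum over all $k!$ permutations of a tensor product of basis vectors $\mathfrak{u}_{i_1} \otimes \cdots \otimes \mathfrak{u}_{i_k}$ determined by the characteristic set $K_\alpha$, and counting the permutations that leave the tensor fixed (there are $\alpha!$ of them, from permuting equal indices) gives the stated norm. Two distinct multi-indices $\alpha, \beta$ with $|\alpha| = |\beta| = k$ have disjoint supports in the orthonormal tensor basis, yielding orthogonality. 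With these in hand, \eqref{eq:etak-def} gives
\begin{equation}
\label{eq:etak-norm}
\| \|\eta_k\|_{\mathcal{U}^{\hat\otimes k}} \|_X^2 = \frac{1}{k!} \sum_{|\alpha|=k} \frac{\|\eta_\alpha\|_X^2}{\alpha!}\, \|E_\alpha\|_{\mathcal{U}^{\hat\otimes k}}^2 = \sum_{|\alpha|=k} \|\eta_\alpha\|_X^2,
\end{equation}
so that $\frac{1}{k!}\| \|\eta_k\| \|_X^2 = \frac{1}{k!}\sum_{|\alpha|=k}\|\eta_\alpha\|_X^2$. Summing over $k \geq 0$ and comparing against the Parseval identity forces the normalization in the second equation of \eqref{eq:mult-int}; I would check that the factors of $k!$ match exactly, which is the delicate bookkeeping step.

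For uniqueness, I would argue that the map $\eta \mapsto (\eta_k)_{k\geq 0}$ is determined by the requirement that $\eta_k$ be symmetric (lying in the symmetric power $\mathcal{U}^{\hat\otimes k}$) together with the expansion identity: any two symmetric tensors inducing the same $\xi_\alpha$-coefficients must coincide, because the correspondence $\alpha \leftrightarrow E_\alpha/\sqrt{k!\,\alpha!}$ between $\{\alpha : |\alpha|=k\}$ and an orthonormal basis of $\mathcal{U}^{\hat\otimes k}$ is a bijection onto a complete orthonormal system. I expect the main obstacle to be the combinatorial verification of the norm formula $\|E_\alpha\|^2 = k!\,\alpha!$ and the attendant orthogonality, since this requires careful use of the characteristic set $K_\alpha$ to track which permutations in \eqref{Ealpha} produce identical tensor monomials; once that is settled, the convergence and the passage between the two normalizations in \eqref{eq:mult-int} are routine applications of Theorem \ref{th:CM} and the completeness of $\Xi$.
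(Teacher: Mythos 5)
Your strategy is exactly the paper's: expand $\eta$ in the Cameron--Martin basis, group the terms by $|\alpha|=k$, convert each $\xi_\alpha$ into $\frac{1}{k!\sqrt{\alpha!}}\boldsymbol{\delta}^{\oh k}(E_\alpha)$ via \eqref{mult-int}, and deduce the norm identity from Parseval together with $\|E_\alpha\|^2_{\mathcal{U}^{\oh k}}=k!\,\alpha!$ and the mutual orthogonality of the $E_\alpha$. Your combinatorial verification of \eqref{Enorm} and of the orthogonality is sound, and your uniqueness argument via the orthonormal system $\{E_\alpha/\sqrt{k!\,\alpha!}:|\alpha|=k\}$ is more explicit than anything the paper writes down.

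The one concrete problem is the normalization in your definition of $\eta_k$ --- precisely the bookkeeping you flagged as delicate, and it does not check out as written. With your $\eta_k=\frac{1}{\sqrt{k!}}\sum_{|\alpha|=k}\frac{\eta_\alpha}{\sqrt{\alpha!}}E_\alpha$ you get $\frac{1}{k!}\boldsymbol{\delta}^{\oh k}(\eta_k)=\frac{1}{\sqrt{k!}}\sum_{|\alpha|=k}\eta_\alpha\xi_\alpha$, so the first identity in \eqref{eq:mult-int} fails by a factor of $\sqrt{k!}$; and since your own norm computation gives $\|\,\|\eta_k\|_{\mathcal{U}^{\oh k}}\|_X^2=\sum_{|\alpha|=k}\|\eta_\alpha\|_X^2$, the weighted sum $\sum_{k}\frac{1}{k!}\|\,\|\eta_k\|_{\mathcal{U}^{\oh k}}\|_X^2$ equals $\sum_{k}\frac{1}{k!}\sum_{|\alpha|=k}\|\eta_\alpha\|_X^2$, which is not Parseval. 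The correct choice is $\eta_k=\sum_{|\alpha|=k}\frac{1}{\sqrt{\alpha!}}\,\eta_\alpha\otimes E_\alpha$ (drop the prefactor $1/\sqrt{k!}$); then $\frac{1}{k!}\boldsymbol{\delta}^{\oh k}(\eta_k)=\sum_{|\alpha|=k}\eta_\alpha\xi_\alpha$ and $\|\,\|\eta_k\|_{\mathcal{U}^{\oh k}}\|_X^2=k!\sum_{|\alpha|=k}\|\eta_\alpha\|_X^2$, so that dividing by $k!$ and summing over $k$ recovers $\mathbb{E}\|\eta\|_X^2$. This is a one-line fix rather than a flaw in the method, but as stated both displayed identities of the theorem fail for your $\eta_k$.
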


\begin{proof}
Using \eqref{mult-int} and Theorem \ref{th:CM},
$$
\eta=\eta_0+\sum_{k=1}^{\infty} \frac{1}{k!}\sum_{|\alpha|=k}
\eta_{\alpha}\boldsymbol{\delta}^{\oh k} \left(\frac{1}{\sqrt{\alpha!}}
E_{\alpha}\right),
$$
and we get the first equality in \eqref{eq:mult-int}
with
$$
\eta_k=\sum_{|\alpha|=k}\boldsymbol{\delta}^{\oh k}
\left(\frac{1}{\sqrt{\alpha!}}
\eta_{\alpha}\otimes E_{\alpha}\right).
$$
The second equality in \eqref{eq:mult-int} now follows from
$$
\mathbb{E}\|\eta\|_X^2=\|\eta_0\|_{X}^2+\sum_{k\geq1}
\sum_{|\alpha|=k} \|\eta_{\alpha}\|^2_X,
$$ because,
by direct computation,
\begin{equation}
\label{Enorm}
\|E_{\alpha}\|_{\mathcal{U}^{\oh n}}^2=k!\alpha!.
\end{equation}
\end{proof}

\section{The  Spaces $(\cL)_{Q,r}$}
\label{sec3}

\begin{definition}
\label{def:seq}
A sequence
$$
Q=\{q_k,\ k\geq 1\}
$$
is called a {\tt weight sequence} if $q_k\geq 1$ for all $k\geq 1$.
\end{definition}
 For $\alpha \in \cJ$ and
a real number $r$ we write
$$
q^{r\alpha}=\prod_{k\geq 1} q_k^{r\alpha_k}.
$$
Given a weight sequence $Q$,
denote by $\Lambda_{Q}$ the following
self-adjoint operator on $\mathcal{U}$:
$$
\Lambda_Q\mfku_k=q_k\mfku_k,\ k\geq 1.
$$
Then, for every $r\in \mbR$, the operator $\Lambda_Q^r$ is
defined. The domain of every $\Lambda_Q^r$ contains
finite linear combinations of $\mfku_k$ and is therefore
dense in $\mathcal{U}$. For $f$ in the domain of
$\Lambda_Q^r$ define the norm
\begin{equation}
\label{norm0}
\|f\|_{Q,r}=\|\Lambda_Q^{r}f\|_{\cU}.
\end{equation}
The operator $\Lambda_Q^r$ extends to every tensor product
 $X\otimes \cU^{\otimes n}$; we will keep the same notation for this
 extension and, in the case $X=\mbR$,
 for  the corresponding norm \eqref{norm0}.
 Denote by $(\cL)^{Q,r}(\mbF)$ the dual space of
 $(\cL)_{Q,r}(\mbF)$ relative to the inner product in $L_2(\mbF)$.
 If $\eta\in (\cL)_{Q,r}(\mbF;X)$ and
 $\zeta \in (\cL)^{Q,r}(\mbF)$, then the duality
 $$
 \langle\!\langle \eta, \zeta\rangle\!\rangle
 $$
 is defined and is an element of $X$.

 \begin{definition}
The space $(\cL)_{Q,r}(\mbF;X)$ is the closure of the
set of random elements of the form
$$
\eta=\eta_0+\sum_{k=1}^N\frac{1}{k!}\,\boldsymbol{\delta}^{\oh k}
 (\eta_k),\ N\geq 1,
$$
where $\eta_0\in X$, $\eta_k\in X\otimes \cU^{\oh k}$,
and each $\eta_k$ in the domain of $\Lambda_Q^r$, with respect to
the norm
\begin{equation}
\label{norm1}
\|\eta\|_{Q,r;X}^2=\|\eta_0\|_X^2+
\sum_{k=1}^N \frac{1}{(k!)^2}\| \, \|\eta_k\|_{Q,r}\,\|_X^2.
\end{equation}
\end{definition}

\begin{theorem}
\label{th:eqnorm}
A formal series $\eta=\sum_{\alpha\in \cJ} \eta_{\alpha} \xi_{\alpha}$
is an element of $(\cL)_{Q,r}(\mbF;X)$ if and only if
\begin{equation}
\label{eq:eqnorm}
\sum_{\alpha\in \cJ} \frac{1}{|\alpha|!}\,q^{2r\alpha} \,
\|\eta_{\alpha}\|_X^2 < \infty.
\end{equation}
The left-hand side of \eqref{eq:eqnorm}, if finite, is equal to
$\|\eta\|_{Q,r;X}^2$.
\end{theorem}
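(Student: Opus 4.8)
The plan is to evaluate $\|\eta\|_{Q,r;X}^2$ from the definition \eqref{norm1} directly in terms of the Cameron--Martin coefficients $\eta_\alpha$, show that it equals the left-hand side of \eqref{eq:eqnorm}, and then read off the ``if and only if'' from the fact that $(\cL)_{Q,r}(\mbF;X)$ is by construction a completion. All the combinatorial ingredients are already assembled in Section~\ref{sec2}; what remains is to track how the weight $\Lambda_Q^r$ interacts with the kernels $E_\alpha$.

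First I would recall, from the proof of Theorem~\ref{th:mult-int}, the explicit form of the kernels of a finite sum $\eta=\sum_{|\alpha|\le N}\eta_\alpha\xi_\alpha$: one has the representation \eqref{eq:mult-int} with $\eta_0=\eta_{(0)}$ and
\[
\eta_k=\sum_{|\alpha|=k}\frac{1}{\sqrt{\alpha!}}\,\eta_\alpha\otimes E_\alpha\in X\otimes\cU^{\oh k}.
\]
The crucial observation is that each $E_\alpha$ is an eigenvector of $\Lambda_Q^r$. Indeed, under the natural multiplicative extension of $\Lambda_Q^r$ to $\cU^{\oh k}$, every summand $\mfku_{i_{\sigma(1)}}\otimes\cdots\otimes\mfku_{i_{\sigma(n)}}$ in \eqref{Ealpha} is an eigenvector with eigenvalue $\prod_{i\in K_\alpha}q_i^{\,r}$, and by the characteristic-set identity this product equals $q^{r\alpha}$; hence $\Lambda_Q^r E_\alpha=q^{r\alpha}E_\alpha$.

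Interpreting the nested norm $\|\,\|\eta_k\|_{Q,r}\,\|_X$ as the weighted cross-norm $\|(I_X\otimes\Lambda_Q^r)\eta_k\|_{X\otimes\cU^{\oh k}}$, I would then combine the eigenvalue relation with the orthogonality of $\{E_\alpha:|\alpha|=k\}$ in $\cU^{\oh k}$ and the norm formula $\|E_\alpha\|^2_{\cU^{\oh k}}=k!\,\alpha!$ from \eqref{Enorm} to obtain
\[
\|\,\|\eta_k\|_{Q,r}\,\|_X^2=\sum_{|\alpha|=k}\frac{1}{\alpha!}\,q^{2r\alpha}\,\|\eta_\alpha\|_X^2\cdot k!\,\alpha!=k!\sum_{|\alpha|=k}q^{2r\alpha}\,\|\eta_\alpha\|_X^2.
\]
Substituting this into \eqref{norm1} and using $\frac{1}{(k!)^2}\cdot k!=\frac{1}{k!}=\frac{1}{|\alpha|!}$ for $|\alpha|=k$ collapses the double sum; absorbing the $k=0$ term (where $|\alpha|!=1$ and $q^{2r\alpha}=1$), I get
\[
\|\eta\|_{Q,r;X}^2=\sum_{\alpha\in\cJ}\frac{1}{|\alpha|!}\,q^{2r\alpha}\,\|\eta_\alpha\|_X^2,
\]
which is precisely the left-hand side of \eqref{eq:eqnorm}. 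This proves the norm identity on finite sums, and the general ``if and only if'' follows since $(\cL)_{Q,r}(\mbF;X)$ is the closure of such finite sums: a formal series belongs to the space exactly when its truncations are Cauchy in $\|\cdot\|_{Q,r;X}$, which by the displayed identity happens exactly when the nonnegative series \eqref{eq:eqnorm} converges.

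The step I expect to be the main obstacle is bookkeeping rather than conceptual: pinning down that $\Lambda_Q^r$ acts on $\cU^{\oh k}$ multiplicatively so that $\Lambda_Q^r E_\alpha=q^{r\alpha}E_\alpha$, and confirming that the nested norm is the weighted Hilbert cross-norm so that the mutual orthogonality of the $E_\alpha$ may be invoked term by term. Once those two points are fixed, the factors $\alpha!$, $k!$ and $q^{2r\alpha}$ cancel cleanly and the identification of the completion with the weighted sequence space is routine.
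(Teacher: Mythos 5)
Your proposal is correct and follows essentially the same route as the paper: both hinge on the orthogonality of the kernels $E_\alpha$, the eigenvalue relation $\Lambda_Q^r E_\alpha=q^{r\alpha}E_\alpha$, and the norm identity $\|E_\alpha\|^2_{Q,r}=q^{2r\alpha}\,|\alpha|!\,\alpha!$, the only cosmetic difference being that the paper reduces by orthonormality to a single term $\eta=f\xi_\alpha$ while you carry the full finite sum through the computation. Your final displayed identity is the intended one (the paper's last line omits the factor $q^{2r\alpha}$, evidently a typo).
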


\begin{proof}
By orthonormality of $\xi_{\alpha}$,
 is enough to consider  $\eta=f\xi_{\alpha}$, $f\in X$.
 Then \eqref{eq:mult-int} implies
 $$
\|\eta\|_{Q,r;X}^2=
\frac{1}{(|\alpha|!)^2\,\alpha!}\|E_{\alpha}\|_{Q,r}^2
\|f\|_X^2=\frac{1}{|\alpha|!}
\|f\|_X^2,
$$
because the definition of $E_{\alpha}$ implies
$$
\|E_{\alpha}\|_{Q,r}^2=q^{2r\alpha}|\alpha|!\, \alpha!.
$$
\end{proof}

\begin{corollary}
\label{cor:dual}
A formal series $\zeta=\sum_{\alpha\in \cJ} \zeta_{\alpha} \xi_{\alpha}$
is an element of the dual space $(\cL)^{Q,r}(\mbF)$
if and only if
$$
\sum_{\alpha\in \cJ}
 q^{-2r\alpha} |\zeta_{\alpha}|^2<\infty.
$$
In this case,
$$
\langle\!\langle \eta, \zeta\rangle \!\rangle =
\sum_{\alpha \in \cJ}
\eta_{\alpha}\zeta_{\alpha}.
$$
\end{corollary}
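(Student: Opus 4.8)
The plan is to realize $(\cL)_{Q,r}(\mbF)$ as a weighted Hilbert space over the index set $\cJ$, with the weighting induced by the operator $\Lambda_Q^{r}$, and then to obtain its dual relative to $L_2(\mbF)$ by inverting that weighting to $\Lambda_Q^{-r}$. By Theorem \ref{th:eqnorm} (taken with $X=\mbR$), the coefficient map $\eta=\sum_\alpha\eta_\alpha\xi_\alpha\mapsto(\eta_\alpha)_{\alpha\in\cJ}$ makes $(\cL)_{Q,r}(\mbF)$ a Hilbert space in which the $\xi_\alpha$ are mutually orthogonal and $\|\eta\|^2_{Q,r}$ is the explicit weighted $\ell^2$ sum recorded there. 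Since $\{\xi_\alpha\}$ is orthonormal in $L_2(\mbF)$ by Theorem \ref{th:CM}, the bilinear form extending the $L_2(\mbF)$ inner product acts on coefficients as $\dual{\eta}{\zeta}=\sum_\alpha\eta_\alpha\zeta_\alpha$; showing that this series converges on the relevant pairs and represents the duality $\dual{\cdot}{\cdot}$ will deliver the final displayed identity once the summability criterion is in hand.

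First I would observe that, because the $\xi_\alpha$ are orthogonal simultaneously in $L_2(\mbF)$ and in $(\cL)_{Q,r}(\mbF)$, the functional $\eta\mapsto\dual{\eta}{\zeta}$ splits as a sum over $\alpha$, so $\zeta$ lies in $(\cL)^{Q,r}(\mbF)$ precisely when this functional is bounded on $(\cL)_{Q,r}(\mbF)$. The Riesz representation theorem then identifies the dual norm as a weighted $\ell^2$ sum whose per-mode weight is the reciprocal of the primal one. The mechanism producing the answer is that the $L_2(\mbF)$ pairing is neutral with respect to the $\Lambda_Q$-scaling, so the $\Lambda_Q^{r}$ built into $\|\cdot\|_{Q,r}$ is replaced by $\Lambda_Q^{-r}$ in the dual; this turns $q^{2r\alpha}$ into $q^{-2r\alpha}$ and yields the criterion $\sum_\alpha q^{-2r\alpha}|\zeta_\alpha|^2<\infty$, with the series equal to $\|\zeta\|^2_{(\cL)^{Q,r}}$.

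The step I expect to be the main obstacle is the exact accounting of the combinatorial normalizations, so that the surviving dual per-mode weight is precisely $q^{-2r\alpha}$ and no spurious constant remains. Concretely, one uses the representation \eqref{mult-int} of $\xi_\alpha$ through $\boldsymbol{\delta}^{\oh|\alpha|}(E_\alpha)$ together with the identity $\|E_\alpha\|^2=|\alpha|!\,\alpha!$ from \eqref{Enorm}, and must verify that the chaos-normalization factors carried by the primal norm of Theorem \ref{th:eqnorm} are exactly matched by those generated in the $L_2(\mbF)$ pairing, so that they cancel and leave only the $\Lambda_Q^{\pm r}$ contribution $q^{\mp r\alpha}$. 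A secondary point is the density of finite linear combinations of the $\xi_\alpha$ in $(\cL)_{Q,r}(\mbF)$, which holds by the very definition of the space: it lets me test boundedness of $\dual{\cdot}{\zeta}$ on finite sums and pass to the limit, giving at once the convergence of $\sum_\alpha\eta_\alpha\zeta_\alpha$ for $\eta\in(\cL)_{Q,r}(\mbF)$ and $\zeta\in(\cL)^{Q,r}(\mbF)$ and the attainment of the supremum defining $\|\zeta\|_{(\cL)^{Q,r}}$. With the per-mode computation settled, both assertions of the corollary follow immediately.
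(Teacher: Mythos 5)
Your overall route---use Theorem \ref{th:eqnorm} with $X=\mbR$ to view $(\cL)_{Q,r}(\mbF)$ as a weighted $\ell^2$ space over $\cJ$, note that the $L_2(\mbF)$ pairing is diagonal in the basis $\Xi$, and read off the dual weight by Cauchy--Schwarz/Riesz---is the natural one, and it is surely the argument intended by the authors (who state the corollary without proof). But the step you yourself flagged as the ``main obstacle'' is precisely where the argument breaks: the combinatorial normalizations do \emph{not} cancel. Theorem \ref{th:eqnorm} gives the per-mode primal weight $q^{2r\alpha}/|\alpha|!$, that is, $\|f\xi_{\alpha}\|_{Q,r}^2=q^{2r\alpha}|f|^2/|\alpha|!$, while the $L_2(\mbF)$ pairing carries no combinatorial factor whatsoever, because the $\xi_{\alpha}$ are orthonormal in $L_2(\mbF)$, so that $\dual{\eta}{\zeta}=\sum_{\alpha}\eta_{\alpha}\zeta_{\alpha}$. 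There is nothing in the pairing to absorb the $1/|\alpha|!$ sitting in the primal norm, and the standard duality computation yields
\begin{equation*}
\|\zeta\|^2_{(\cL)^{Q,r}}
=\sup_{\eta\neq 0}\frac{\bigl|\sum_{\alpha}\eta_{\alpha}\zeta_{\alpha}\bigr|^2}{\sum_{\alpha}\frac{q^{2r\alpha}}{|\alpha|!}\,\eta_{\alpha}^2}
=\sum_{\alpha\in\cJ}|\alpha|!\,q^{-2r\alpha}\,\zeta_{\alpha}^2,
\end{equation*}
not $\sum_{\alpha}q^{-2r\alpha}\zeta_{\alpha}^2$. The two conditions are genuinely inequivalent: take $\zeta_{n\varepsilon_1}=q_1^{rn}(n!)^{-1/2}$ for $n\geq 0$ and $\zeta_{\alpha}=0$ otherwise; then $\sum_{\alpha}q^{-2r\alpha}\zeta_{\alpha}^2=\sum_{n\geq 0}1/n!<\infty$, yet $\sum_{\alpha}|\alpha|!\,q^{-2r\alpha}\zeta_{\alpha}^2=\sum_{n\geq 0}1=\infty$, and this $\zeta$ does not define a bounded functional on $(\cL)_{Q,r}(\mbF)$.

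What your computation proves, once the bookkeeping is done honestly, is the corrected statement: $\zeta\in(\cL)^{Q,r}(\mbF)$ if and only if $\sum_{\alpha}|\alpha|!\,q^{-2r\alpha}|\zeta_{\alpha}|^2<\infty$. This corrected form is in fact the one the paper itself relies on: in the proof of Proposition \ref{prop:SE} the series that is tested is $\sum_{\alpha}\frac{|\alpha|!}{\alpha!}h^{2\alpha}q^{-2r\alpha}$, which is exactly $\sum_{\alpha}|\alpha|!\,q^{-2r\alpha}|\zeta_{\alpha}|^2$ for $\zeta_{\alpha}=h^{\alpha}/\sqrt{\alpha!}$; without the factor $|\alpha|!$ that series would equal $\exp\bigl(\|h\|_{Q,-r}^2\bigr)$, always finite, and the condition $\|h\|_{Q,-r}<1$ in that proposition would be vacuous. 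In other words, the corollary as printed is missing the factor $|\alpha|!$, and your proposal, by asserting a cancellation that does not occur, reproduces this slip instead of detecting it. The repair to your write-up is small---keep the Riesz argument, track the $|\alpha|!$, and state the dual criterion as $\sum_{\alpha}|\alpha|!\,q^{-2r\alpha}|\zeta_{\alpha}|^2<\infty$ (the identity $\dual{\eta}{\zeta}=\sum_{\alpha}\eta_{\alpha}\zeta_{\alpha}$ then follows as you describe)---but as written the central step of your proof is incorrect.
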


For $h\in \cU$ define the stochastic exponential
\begin{equation}
\label{eq:SE}
\cE_h=\exp\left(\dot{W}(h)-\frac{1}{2}\|h\|_{\cU}^2\right).
\end{equation}

\begin{proposition}
\label{prop:SE} $\cE_h\in (\cL)^{Q,r}(\mbF)$ if and only of
$\|h\|_{Q,-r}< 1$.
\end{proposition}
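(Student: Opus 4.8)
The plan is to compute the Wiener chaos expansion of $\cE_h$ explicitly, read off its coefficients, and then apply the description of the dual space $(\cL)^{Q,r}(\mbF)$. First I would write $h=\sum_k h_k\mfku_k$ with $h_k=(h,\mfku_k)_{\cU}$, so that by linearity of $\dW$ we have $\dW(h)=\sum_k h_k\xi_k$ and $\|h\|_{\cU}^2=\sum_k h_k^2$. Factoring the exponential across the independent coordinates gives
\begin{equation*}
\cE_h=\prod_{k\geq1}\exp\!\left(h_k\xi_k-\tfrac12 h_k^2\right),
\end{equation*}
and the generating-function identity $\sum_{n\geq0}\frac{t^n}{n!}H_n(x)=\exp(tx-t^2/2)$ for the Hermite polynomials turns each factor into a series in $H_n(\xi_k)$. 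Collecting terms and using $\xi_\alpha=\prod_k H_{\alpha_k}(\xi_k)/\sqrt{\alpha_k!}$ from \eqref{eq:basis}, I expect the clean formula
\begin{equation*}
\cE_h=\sum_{\alpha\in\cJ}\frac{h^{\alpha}}{\sqrt{\alpha!}}\,\xi_\alpha,
\qquad h^{\alpha}:=\prod_{k\geq1}h_k^{\alpha_k},
\end{equation*}
so that $(\cE_h)_\alpha=h^{\alpha}/\sqrt{\alpha!}$ (a consistency check is that $\sum_\alpha(\cE_h)_\alpha^2=\prod_k e^{h_k^2}=e^{\|h\|_{\cU}^2}=\mbE\cE_h^2$).

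Next I would feed these coefficients into the dual-space criterion. By Theorem \ref{th:eqnorm} the $(\cL)_{Q,r}(\mbF)$-norm assigns to $\xi_\alpha$ the weight $q^{2r\alpha}/|\alpha|!$; since the duality $\dual{\cdot}{\cdot}$ is the $L_2(\mbF)$ pairing $\sum_\alpha\eta_\alpha\zeta_\alpha$, membership of $\zeta=\sum_\alpha\zeta_\alpha\xi_\alpha$ in $(\cL)^{Q,r}(\mbF)$ is governed by the reciprocal weights, i.e. by finiteness of $\sum_\alpha|\alpha|!\,q^{-2r\alpha}|\zeta_\alpha|^2$ (this is the content of Corollary \ref{cor:dual}). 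Substituting $\zeta=\cE_h$ reduces the claim to deciding whether
\begin{equation*}
\sum_{\alpha\in\cJ}|\alpha|!\,q^{-2r\alpha}\,\frac{(h^{\alpha})^2}{\alpha!}
=\sum_{\alpha\in\cJ}\frac{|\alpha|!}{\alpha!}\prod_{k\geq1}\bigl(q_k^{-2r}h_k^2\bigr)^{\alpha_k}
\end{equation*}
is finite.

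The crux of the argument is to recognize this multi-index sum as a geometric series. Grouping by $n=|\alpha|$ and applying the multinomial theorem to the inner sum over $\{\alpha:|\alpha|=n\}$ collapses it to $\bigl(\sum_{k}q_k^{-2r}h_k^2\bigr)^n$, and since $\sum_k q_k^{-2r}h_k^2=\|\Lambda_Q^{-r}h\|_{\cU}^2=\|h\|_{Q,-r}^2$ by the definition \eqref{norm0} of the norm, the whole expression equals $\sum_{n\geq0}\|h\|_{Q,-r}^{2n}$. This geometric series converges precisely when $\|h\|_{Q,-r}^2<1$, which is the asserted criterion; the strict inequality is exactly the radius-of-convergence threshold, and the borderline case $\|h\|_{Q,-r}=1$ yields the divergent series $\sum_n 1$. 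The only technical point requiring care is the interchange of the product over $k$ with the summation over $\alpha$ (equivalently, justifying the multinomial collapse over an infinite index set), but since every term is nonnegative this is a routine application of Tonelli's theorem; thus I anticipate no genuine obstacle beyond this bookkeeping, the substantive content being the identification of the threshold value $1$.
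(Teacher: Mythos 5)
Your proof is correct and follows essentially the same route as the paper's: expand $\cE_h$ via the Hermite generating function to get coefficients $h^{\alpha}/\sqrt{\alpha!}$, then collapse the weighted dual-norm sum by the multinomial theorem into the geometric series $\sum_{n\geq0}\|h\|_{Q,-r}^{2n}$. You even carry the $|\alpha|!$ factor correctly in the dual-space criterion, which is what the paper's computation actually uses even though the printed statement of Corollary \ref{cor:dual} omits it.
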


\begin{proof}
We apply Corollary \ref{cor:dual}.
If $h=\sum_{k\geq 1} h_k\mfku_k$, $h_k\in \mbR$, then
by direct computation using the generating function of the
Hermite polynomials,
\begin{equation}
\label{Eh00}
\cE_h=\sum_{\alpha\in \cJ} \frac{h^{\alpha}}{\sqrt{\alpha!}}\,
 \xi_{\alpha}, \ h^{\alpha}=\prod_{k\geq 1} h_k^{\alpha_k}.
 \end{equation}
 Then, by the multinomial expansion,
 $$
\sum_{\alpha\in \cJ}
\frac{|\alpha|!}{\alpha!}h^{2\alpha}q^{-2r\alpha}=
\sum_{n\geq 0} \left(\sum_{k\geq 1} h_k^2q_k^{-r}\right)^n
=\sum_{n\geq 0} \Big(\|h\|_{Q,-r}^2\Big)^n
$$
and the result follows.
\end{proof}
In applications to evolutions equations, the number $r$ in
the solution space
$(\cL)_{Q,r;X}$ is usually {\em negative}, as we want
to help the series in \eqref{eq:eqnorm} to converge.
As a result, the following definition is appropriate.

\begin{definition}
We say the the element $h$ of $\cU$ is {\tt sufficiently
small relative to the sequence $Q$}
 if there exists a positive number $r$ such that
$$
\|h\|_{r,Q}< 1.
$$
\end{definition}

\section{The Evolution Equation and Main Result}
\label{sec4}

\begin{definition}
\label{def:Ntrip} The triple $(V,H,V^{\prime})$ of Hilbert spaces is
called \textbf{normal} if and only if

\begin{enumerate}
\item $V\hookrightarrow H \hookrightarrow V^{\prime}$
 and both embeddings
$V\hookrightarrow H $ and $H \hookrightarrow V^{\prime}$ are dense
and continuous;

\item The space $V^{\prime}$ is the dual of $V$
relative to the inner product
in $H$;

\item There exists a constant $C>0$ such that
$|(f, v)_{H}| \leq C\|v\|_{V}%
\|f\|_{V^{\prime}}$ for all $v\in V$ and $f\in H$.
\end{enumerate}
\end{definition}

For example, the Sobolev spaces $(H^{\ell+
\gamma}_{2}({\mathbb{R}}^{d}), H^{\ell}_{2}({\mathbb{R}}^{d}),
H^{\ell- \gamma}_{2}({\mathbb{R}}^{d}))$, $\gamma>0$,
$\ell\in{\mathbb{R}}$, form a normal triple.

Denote by $\langle v^{\prime}, v\rangle$, $v^{\prime}\in
V^{\prime}$, $v\in V$, the duality between $V$ and $V^{\prime}$
relative to the inner product in $H$. The properties of the normal
triple imply that $|\langle v^{\prime}, v\rangle|\leq
C\|v\|_{V}\|v^{\prime}\|_{V^{\prime}}$, and, if $v^{\prime}\in H$
and $v\in V$, then $\langle v^{\prime}, v\rangle=
(v^{\prime},v)_{H}.$

We will also use the following notation:
\begin{equation}
{\mathcal{V}(T)}=L_{2}((0,T);V),\
 {\mathcal{H}(T)}=L_{2}((0,T);H),\
 {\mathcal{V}}^{\prime}(T)=L_{2}((0,T);V^{\prime}).
  \label{sp-notation}%
\end{equation}
Given a normal triple $(V,H,V^{\prime})$,  let
${\mathbf{A}}(t):V\rightarrow V^{\prime}$ and
${\mathbf{M}}(t):V\rightarrow V^{\prime}\otimes{\mathcal{U}}$ be
bounded linear operators for every $t\in [0,T]$.

\begin{definition}
\label{def:parab}
 The solution of the stochastic evolution equation
\begin{equation}
u(t)=\inc+\int_0^t \Big(
{\mathbf{A}}(s)u(s)+f(s)+\boldsymbol{\delta}({\mathbf{M}}(s)u(s))
 \Big)ds,\ 0\leq
t\leq T,
\label{eq:evol}%
\end{equation}
with $f\in \bigcup_{r}
(\cL)_{Q,r}({\mathbb{F}};{\mathcal{V}}^{\prime}(T))$
and $u_{0}\in \bigcup_{r}
(\cL)_{Q,r} ({\mathbb{F}};H)$, is an
object with the following properties:
 \begin{enumerate}
 \item There exists a weight sequence $Q'$ and a real number
 $r'$ such that
 $$
 u\in (\cL)_{Q',r'}({\mathbb{F}};{\mathcal{V}}(T));
 $$
 \item
 For every $h\in \cU$ that is sufficiently small relative to
 the sequence $Q'$, the function
 $$
 u_h(t)=\langle\!\langle{u(t)},\cE_h\rangle\!\rangle
$$
satisfies
\begin{equation}
u_h(t)=u_h(0)+\int_{0}^{t}
\Big(\mathbf{A}(s)u_h(s)+f_h(s)+
({\mathbf{M}}(s)u_h(s),h)_{\cU}\Big)ds
\label{eq:evol-ds}%
\end{equation}
 in ${\mathcal{V}}^{\prime}(T)$.
\end{enumerate}
\end{definition}

\begin{remark}
\label{re:cont} The solution described by Definition \ref{def:parab}
belongs to the class of \textquotedblleft variational solutions",
which is quite typical for partial differential equations (see
\cite{KR,Lions,LM,Roz}, etc.) Indeed, direct computations show that
$\mathbf{D}\cE_h=h\cE_h$, and then \eqref{eq:evol-ds}
is the result of a (formal) application of the
duality $\langle\!\langle u,\cE_h\rangle \!\rangle$ to
both sides of \eqref{eq:evol}.
\end{remark}

 Fix an orthonormal basis $\mathfrak{U}=\{\mfku_k,\, k\geq 1\}$
  in ${\mathcal{U}}$. Then, for every $v\in V$, there exists a
collection $v_{k}\in V^{\prime
},\ k\geq1$, such that
\[
{\mathbf{M}}v=\sum_{k\geq1}v_{k}\otimes\mathfrak{u}_{k}.
\]
We therefore define the operators
${\mathbf{M}}_{k}:\,V\rightarrow V^{\prime}$
by setting ${\mathbf{M}}_{k}v=v_{k}$ and write
\[
{\mathbf{M}}v=\sum_{k\geq1}({\mathbf{M}}_{k}v)\otimes\mathfrak{u}_{k}.
\]
Then
\begin{equation}
\label{wp00}
\boldsymbol{\delta}(\mathbf{M}(s)u(s))=
\sum_{k\geq 1}{\mathbf{M}}_k(s)u(s)\left(  t\right)  \diamond
\xi_k,\ \xi_k=\dW(\mfku_k),
\end{equation}
and equation (\ref{eq:evol}) becomes
\begin{equation}
{u}(t)=\inc+\int_0^t\Big({\mathbf{A}}(s)u(s)+f(s)+
\sum_{k\geq 1}{\mathbf{M}}_k(s)u(s)  \diamond
\dot{W}(\mfku_k)\Big)ds.
 \label{eq:evol-b}%
\end{equation}

If the noise $\dW$ itself depends on time, this dependence
is encoded in the operator $\bM$. As a result, \eqref{eq:evol}
includes many popular evolution equations as particular cases.

\begin{example}
\label{example:main}
(1)  Let us
see how the elementary It\^{o} equation $u(t)=1+\int_0^tu(s)dw(s)$,
 where
 $w$ is a standard Brownian motion, is a
particular case of  \eqref{eq:evol}. We take $V=H=V'=\mbR$,
$\cU=L_2((0,T))$, so that
$\dW=\dot{w}(t)=\sum_{k\geq 1}
\mfku_k(t)\xi_k$ is the Gaussian white noise in time.
 Next, put $\bA=0$,  $\bM_k(t)u(t)=u(t)\mfku_k(t)$.
Then $\boldsymbol{\delta}(\bM_k(t)u(t))=u(t)\diamond\sum_{k\geq 1}
\mfku_k(t)\xi_k=u(t)\diamond \dot{w}(t)$.
The equivalence
of $u(t)=1+\int_0^t \boldsymbol{\delta}(\bM(s)u(s))ds$ and
$u(t)=1+\int_0^t u(s)dw(s)$ now follows from the
equality
$\int_0^t u(s)\diamond \dot{w}(s)ds=\int_0^tu(s)dw(s)$; see
\cite[Section 2.5]{HOUZ}.

(2) Equations with space-time white noise correspond to
$\cU=L_2((0,T))\times L_2(G)$, $G\subseteq \mbR^d$, so that
the basis in $\cU$ is naturally indexed by a pair of
indices $i,k$: $\mfku_{ik}=m_i(t)h_k(x)$.
Thus,  equation $du(t,x)=u_{xx}dt+udW(t,x)$ is a particular
case of \eqref{eq:evol} with $\bM_{ik}(t)u(t,x)=
m_i(t)h_k(x)u(t,x)$.

(3) Equations with fractional noise are  also covered
by \eqref{eq:evol}. For example, consider
$u(t)=1+\int_0^tu(s)dW^H(s)$, where $W^H$ is fractional
Brownian motion and $H>1/2$. With the appropriate
interpretation of the stochastic integral, we have
$$
\int_0^tu(s)dw^H(s)=\int_0^t u(s)\diamond \dot{w}^H(s)ds
$$
where $$
\dot{w}^H(t)=\sum_{k\geq 1}
m_k^H(t)\xi_k
$$
and $m_k^H$ are suitable elements of $L_2((0,T))$ (see
for example \cite[Chapter 5]{Nualart}. Then, in \eqref{eq:evol},
 we take
$\cU=L_2((0,T))$ and $\bM_k(t)u(t)=m_k^H(t)u(t)$.
\end{example}

Let us fix an orthonormal basis $\mathfrak{U}$ in $\mathcal{U}$ so that
equation (\ref{eq:evol}) becomes (\ref{eq:evol-b}). With
$\xi_k=\dot{W}(\mathfrak{u}_k)$, define $\xi_{\alpha}, \ \alpha\in \cJ$
according to \eqref{eq:basis}. Given a Hilbert space $X$,
 every element $\eta$ of
$(\cL)_{Q,r}(\mathbb{F};X)$ can be written as a formal series
$\sum_{\alpha}\eta_{\alpha}\xi_{\alpha}$ with $\eta_{\alpha}\in X$
satisfying \eqref{eq:eqnorm}. The following theorem provides a
necessary and sufficient condition for the formal series
$\sum_{\alpha}u_{\alpha}(t)\xi_{\alpha}$ to be a solution of
\eqref{eq:evol-b}.

\begin{theorem}
\label{th: equivalence1}
Let $u=\sum_{\alpha\in{\mathcal{J}}}u_{\alpha}%
\xi_{\alpha}$ be an element of
$(\cL)_{Q',r'}({\mathbb{F}};{\mathcal{V}}(T))$.
The process $u$ is a solution of equation (\ref{eq:evol-b}) if
and only if the functions $u_{\alpha}$ have the following
properties:

\begin{enumerate}
\item every $u_{\alpha}$ is an element of ${\mathbf{C}} \left(
[0,T];H)\right).  $
\item the system of equalities
\begin{equation}
u_{\alpha}(t)=\inc_{\alpha}+\int_{0}^{t}\left(  \
{\mathbf{A}}(s)u_{\alpha}(s)+f_{\alpha}(s)
+\sum_{k\geq1}\sqrt{\alpha_{k}}\,{\mathbf{M}}_{k}(s)%
u_{\alpha-\varepsilon_{k}}(s)\right)  ds
\label{eq:evol-S}%
\end{equation}
holds in $\cV^{\prime}(T)$ for all
$\alpha\in{\mathcal{J}}$.
\end{enumerate}
\end{theorem}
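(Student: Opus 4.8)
The plan is to read Definition \ref{def:parab} through the generating function $u_h(t)=\dual{u(t)}{\cE_h}$ and to reduce the defining deterministic identity \eqref{eq:evol-ds} to the chaos system \eqref{eq:evol-S} by matching coefficients. Since the hypothesis already places $u$ in $(\cL)_{Q',r'}(\mbF;\cV(T))$, property (1) of Definition \ref{def:parab} holds with this very pair $Q',r'$, so the entire content is the equivalence of \eqref{eq:evol-ds} (for all $h$ sufficiently small relative to $Q'$) with \eqref{eq:evol-S}, together with the claim that \eqref{eq:evol-S} forces $u_{\alpha}\in\bC([0,T];H)$. First I would record the generating-function identities: by \eqref{Eh00} and Corollary \ref{cor:dual}, $u_h(t)=\sum_{\alpha}\frac{h^{\alpha}}{\sqrt{\alpha!}}u_{\alpha}(t)$, and similarly $f_h=\sum_{\alpha}\frac{h^{\alpha}}{\sqrt{\alpha!}}f_{\alpha}$ and $u_h(0)=\sum_{\alpha}\frac{h^{\alpha}}{\sqrt{\alpha!}}\inc_{\alpha}$. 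Because $\|h\|_{Q',-r'}<1$ (Proposition \ref{prop:SE}), these series converge absolutely in $\cV(T)$, $\cV'(T)$, and $H$ respectively, as one checks from the weighted estimate of Theorem \ref{th:eqnorm}.

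Next I would substitute these expansions into \eqref{eq:evol-ds} and extract the coefficient of $h^{\alpha}/\sqrt{\alpha!}$. The linear terms $\bA(s)u_h$ and $f_h$ retain their coefficients verbatim, so the only computation is for the noise term. Writing $\bM v=\sum_{k\geq1}(\bM_k v)\otimes\mfku_k$ gives $(\bM(s)u_h(s),h)_{\cU}=\sum_{k\geq1}h_k\,\bM_k(s)u_h(s)$; inserting the expansion of $u_h$, reindexing by $\alpha=\beta+\varepsilon_k$, and using $(\alpha-\varepsilon_k)!=\alpha!/\alpha_k$ (so that $1/\sqrt{(\alpha-\varepsilon_k)!}=\sqrt{\alpha_k}/\sqrt{\alpha!}$) turns it into $\sum_{\alpha}\frac{h^{\alpha}}{\sqrt{\alpha!}}\sum_{k\geq1}\sqrt{\alpha_k}\,\bM_k(s)u_{\alpha-\varepsilon_k}(s)$. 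This is exactly the chaos-coefficient formula \eqref{eq:IBasG} for $\boldsymbol{\delta}(\bM(s)u(s))$, read through the pairing. Consequently \eqref{eq:evol-ds} becomes $\sum_{\alpha}\frac{h^{\alpha}}{\sqrt{\alpha!}}D_{\alpha}(t)=0$ in $\cV'(T)$, where $D_{\alpha}(t)$ is precisely the difference of the two sides of \eqref{eq:evol-S}.

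To pass from the vanishing of this generating series to the vanishing of each $D_{\alpha}$, I would use an identity-theorem argument: \eqref{eq:evol-ds} holds for every $h$ sufficiently small relative to $Q'$, in particular for all finitely supported $h=\sum_{k\leq N}h_k\mfku_k$ with small real entries $h_k$, and for such $h$ the map $(h_1,\ldots,h_N)\mapsto\sum_{\alpha}\frac{h^{\alpha}}{\sqrt{\alpha!}}D_{\alpha}$ is a convergent $\cV'(T)$-valued power series in finitely many variables; hence all of its Taylor coefficients, i.e.\ all $D_{\alpha}$, vanish, which is \eqref{eq:evol-S}. The converse is the same computation run backwards: assuming \eqref{eq:evol-S} for every $\alpha$, multiply by $h^{\alpha}/\sqrt{\alpha!}$ and sum (the series converge in $\cV(T)$, $\cV'(T)$, $H$ as above), recovering \eqref{eq:evol-ds}, so that $u$ is a solution. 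Finally, item (1) of the theorem follows from \eqref{eq:evol-S} alone: each $u_{\alpha}$ lies in $L_2((0,T);V)$ as a coefficient of an element of $(\cL)_{Q',r'}(\mbF;\cV(T))$, while \eqref{eq:evol-S} exhibits its time derivative as an element of $\cV'(T)$ — the $k$-sum being finite because $\alpha$ has finite support — so the classical embedding $\{u\in L_2(0,T;V):\dot u\in L_2(0,T;V')\}\hookrightarrow \bC([0,T];H)$ valid in a normal triple yields $u_{\alpha}\in\bC([0,T];H)$.

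The main obstacle is to make the coefficient matching rigorous rather than formal, that is, to justify that the duality $\dual{\cdot}{\cE_h}$ commutes with the time integral $\int_0^t$ and with the bounded operators $\bA(s)$ and $\bM_k(s)$ when applied to \eqref{eq:evol-b}, and that every interchange of summation, integration, and the (in $\alpha$ unbounded) noise action is legitimate. This is where the weighted spaces are essential: the finiteness in \eqref{eq:eqnorm} together with the smallness $\|h\|_{Q',-r'}<1$ furnishes absolute convergence of all the generating series in $\cV(T)$, $\cV'(T)$, and $H$, locally uniformly in the admissible $h$, which simultaneously licenses the termwise operations and supplies the analyticity needed for the identity-theorem step.
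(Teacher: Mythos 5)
Your proposal is correct and follows essentially the same route as the paper: expand $u_h=\sum_{\alpha}\frac{h^{\alpha}}{\sqrt{\alpha!}}u_{\alpha}$, substitute into \eqref{eq:evol-ds}, and match Taylor coefficients in $h$, the key computation being the reindexing of the noise term $(\bM(s)u_h(s),h)_{\cU}$ into $\sum_k\sqrt{\alpha_k}\,\bM_k(s)u_{\alpha-\varepsilon_k}(s)$. The only cosmetic difference is that the paper justifies the coefficient extraction by citing the analyticity result of Kondratiev et al.\ (after assuming $\sum_k (q'_k)^2<\infty$ without loss of generality), whereas you argue absolute convergence and the identity theorem for finitely supported $h$ directly; both are legitimate ways to license the same termwise manipulations.
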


\begin{proof}
If $h=\sum_{k}h_k\mfku_k$, then,
by \eqref{Eh00} and Corollary \ref{cor:dual}
$u_h=\sum_{\alpha\in \cJ}\frac{u_{\alpha}h^{\alpha}}{\sqrt{\alpha!}}.$
 With no loss of generality we can assume that
$\sum_{k\geq 1} (q'_k)^2<\infty$.
Then, by a general result  from functional analysis \cite{Kond},
 the mapping $u_h: \cU\mapsto \mathcal{V}(T)$ is analytic at zero.
  By direct computation,
 \begin{enumerate}
 \item if $u_h$ satisfies \eqref{eq:evol-ds}, then
 $$
 u_{\alpha}=\frac{1}{\sqrt{\alpha!}}
 \,\frac{\partial^{|\alpha|}u_h}
 {\partial h_1^{\alpha_1} h_2^{\alpha_2}\ldots}\Big{|}_{h=0}
 $$
 and each $u_{\alpha}$ satisfies \eqref{eq:evol-S}.
 \item if $u_{\alpha}$ satisfies \eqref{eq:evol-S} and
 $u_h=\sum_{\alpha} u_{\alpha}h^{\alpha}$, then $u_h$
 satisfies \eqref{eq:evol-ds}.
 \end{enumerate}
 \end{proof}

 This simple but very helpful result establishes the equivalence of the
\textquotedblleft physical" (\ref{eq:evol-b}) and the (stochastic) Fourier
(\ref{eq:evol-S}) forms of equation (\ref{eq:evol}). The system of equations
(\ref{eq:evol-S}) is often referred in the literature as the \textit{
propagator} of equation (\ref{eq:evol-b}). Note that the propagator is
lower-triangular and can be solved by induction on $|\alpha|$.

To prove existence and uniqueness of solution of (\ref{eq:evol}),
 we make the
following assumptions about the operators $\bA$ and $\bM$.

(A):\textbf{ }\textit{For every }$U_{0}\in H$\textit{ and }$F\in
\mathcal{V}^{\prime}(T)$\textit{, there exists
a unique
function }$U\in\mathcal{V}$\textit{ that solves
the deterministic equation }%
\begin{equation}
U(t)=U_0+{\mathbf{A}}(t)U(t)+F(t),
\label{eq: determ}%
\end{equation}
\textit{ and there exists a constant }$C=C\left(
{\mathbf{A}},T\right)
$\textit{ so that }%
\begin{equation}
\Vert U\Vert_{{\mathcal{V}}(T)}\leq C({\mathbf{A}},T)\big(\Vert
U_{0}\Vert
_{H}+\Vert F\Vert_{{\mathcal{V}}^{\prime}(T)}\big).
 \label{det-reg}%
\end{equation}
In particular, the operator ${\mathbf{A}}$ generates a semi-group
$\Phi=\Phi_{t,s}, \, t\geq s\geq0,$ and
$$
U(t)=\Phi_{t,0}U_0+\int_0^t\Phi_{t,s}F(s)ds.
$$

(M): For every $v\in{\mathcal{V}}(T)$ and $k\geq 1$,
\begin{equation}
\int_{0}^{T}\left\Vert \int_{0}^{t}\
\Phi_{t,s}{\mathbf{M}}_{k}(s)v\left( s\right)  ds\right\Vert
_{V}^{2}dt\leq C_{k}^{2}\left\Vert v\right\Vert
_{\mathcal{V}(T)}^{2}, \label{Ck-def}%
\end{equation}
with numbers $C_{k}$ independent of $v$.

\begin{remark}
\label{rm:evol} There are various types of assumptions on the
operator ${\mathbf{A}}$ that yield the statement of the assumption
(A). In particular, (A) holds if the operator ${\mathbf{A}}$ \ is
coercive in $\left( V,H,V^{\prime}\right)  $:
\[
\langle{\mathbf{A}}(t)v,v\rangle+\gamma\Vert v\Vert_{V}^{2}
\leq C\Vert v\Vert_{H}^{2}%
\]
for every $v\in V$, $t\in [0,T]$,
 where $\gamma>0$ and $C\in{\mathbb{R}}$ are both
independent of $v,t$.
\end{remark}

The following theorem is the main result of this paper.

\begin{theorem}
\label{th:main}
In addition to (A) and (M), assume that,
for some positive number $r$ and a weight
sequence $Q$,
$\inc\in (\mathcal{L})_{Q,-r}(\mathbb{F};H)$
 and $f\in(\cL)_{Q,-r}(\mathbb{W};\cV'(T))$.
If
$$
\sum_{\alpha \in \cJ} \frac{1}{q^{2r\alpha}\,\alpha!}<\infty,
$$
then  there exists a weight  sequence $Q^{\circ}$
and a negative real number $r^{\circ}$
such that equation \eqref{eq:evol} has a unique
solution
$u\in (\mathfrak{L})_{Q^{\circ},r^{\circ}}(\mathbb{F};\cV(T))$ and
\begin{equation}
  \label{eq:main1}
\|u\|_{Q^{\circ},r^{\circ};\cV(T)}^2
\leq C\cdot \left( \|\inc\|_{Q,r;H}^2
+\|f\|_{Q,r;\cV'(T)}^2\right).
\end{equation}
The number $C>0$ depends only on $Q,r,T$
and the operators $\bA, \bM$.
\end{theorem}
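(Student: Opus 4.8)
The plan is to pass from \eqref{eq:evol-b} to its propagator \eqref{eq:evol-S} by Theorem \ref{th: equivalence1} and to solve the propagator coefficient by coefficient. Since \eqref{eq:evol-S} is lower triangular in $|\alpha|$, I would induct on $n=|\alpha|$. For $n=0$ the equation for $u_{(0)}$ is the deterministic equation \eqref{eq: determ} with data $\inc_{(0)}\in H$ and $f_{(0)}\in\cV'(T)$, so assumption (A) furnishes a unique $u_{(0)}\in\cV(T)\cap\bC([0,T];H)$. For $n\geq1$ the coefficients $u_{\alpha-\varepsilon_k}$ have length $n-1$ and are already known, so $\sum_{k\geq1}\sqrt{\alpha_k}\,\bM_k(s)u_{\alpha-\varepsilon_k}(s)$ is a known element of $\cV'(T)$; adding it to $f_{\alpha}$ and applying (A) once more produces a unique $u_{\alpha}$ with the representation
\[
u_{\alpha}(t)=\FF_{t,0}\inc_{\alpha}+\int_0^t\FF_{t,s}f_{\alpha}(s)\,ds+\sum_{k\geq1}\sqrt{\alpha_k}\int_0^t\FF_{t,s}\bM_k(s)u_{\alpha-\varepsilon_k}(s)\,ds.
\]
Uniqueness at each level gives uniqueness of the whole family $\{u_{\alpha}\}$, hence of $u=\sum_{\alpha}u_{\alpha}\xi_{\alpha}$ once it is shown to lie in a space $(\cL)_{Q^{\circ},r^{\circ}}(\mbF;\cV(T))$.

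Next I would extract an a priori estimate. Put $a_{\alpha}=C(\bA,T)\big(\|\inc_{\alpha}\|_{H}+\|f_{\alpha}\|_{\cV'(T)}\big)$ and $y_{\alpha}=\|u_{\alpha}\|_{\cV(T)}$. Bounding the first two terms above by \eqref{det-reg} and each summand of the last term by assumption (M), i.e.\ \eqref{Ck-def}, yields the scalar recursion
\[
y_{\alpha}\leq a_{\alpha}+\sum_{k\geq1}\sqrt{\alpha_k}\,C_k\,y_{\alpha-\varepsilon_k}.
\]
Iterating down to length zero and counting the $|\gamma|!/\gamma!$ orderings of the removals $\gamma=\alpha-\beta$, each ordering contributing a factor $C^{\gamma}\sqrt{\alpha!/\beta!}$ with $C^{\gamma}=\prod_{k}C_k^{\gamma_k}$, I obtain the majorant
\[
y_{\alpha}\leq\sum_{\gamma\leq\alpha}\frac{|\gamma|!}{\gamma!}\,C^{\gamma}\sqrt{\frac{\alpha!}{(\alpha-\gamma)!}}\;a_{\alpha-\gamma}.
\]

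The heart of the argument is to pick $Q^{\circ},r^{\circ}$ and sum the weighted series. By Theorem \ref{th:eqnorm}, $\|u\|_{Q^{\circ},r^{\circ};\cV(T)}^2=\sum_{\alpha}\frac{(q^{\circ})^{2r^{\circ}\alpha}}{|\alpha|!}y_{\alpha}^2$. I would fix $r^{\circ}<0$, square the majorant, and apply Cauchy--Schwarz with the auxiliary weights $b_{\gamma}=\frac{|\gamma|!}{\gamma!}\rho^{\gamma}$, where $\sum_k\rho_k<1$ so that $\sum_{\gamma}b_{\gamma}=(1-\sum_k\rho_k)^{-1}<\infty$. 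Interchanging the $\alpha$- and $\gamma$-summations, substituting $\beta=\alpha-\gamma$, using the elementary bound $\frac{(\beta+\gamma)!}{|\beta+\gamma|!\,\beta!}\leq\frac{\gamma!}{|\beta|!\,|\gamma|!}$, and summing the geometric $\gamma$-series by the multinomial identity $\sum_{\gamma}\frac{|\gamma|!}{\gamma!}x^{\gamma}=\prod_k(1-x_k)^{-1}$, the double sum factors as
\[
\|u\|_{Q^{\circ},r^{\circ};\cV(T)}^2\leq\Big(\sum_{\gamma}b_{\gamma}\Big)\Big(\prod_{k\geq1}\frac{1}{1-\rho_k^{-1}C_k^2(q^{\circ}_k)^{2r^{\circ}}}\Big)\sum_{\beta}\frac{(q^{\circ})^{2r^{\circ}\beta}}{|\beta|!}\,a_{\beta}^2 .
\]
I would then choose $q^{\circ}_k$ large enough to meet two demands at once: $q^{\circ}_k\geq q_k^{\,r/|r^{\circ}|}$, so that $(q^{\circ})^{2r^{\circ}\beta}\leq q^{-2r\beta}$ and the last sum is dominated by the data norms on the right of \eqref{eq:main1}; and $\rho_k^{-1}C_k^2(q^{\circ}_k)^{2r^{\circ}}<1$ with $\sum_k C_k^2(q^{\circ}_k)^{2r^{\circ}}<\infty$, so that the infinite product converges. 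Here the hypothesis $\sum_{\alpha}q^{-2r\alpha}/\alpha!<\infty$, equivalently $\sum_k q_k^{-2r}<\infty$ (that is, $\Lambda_Q^{-r}$ is Hilbert--Schmidt on $\cU$), is what makes the minimal choice $q^{\circ}_k\asymp q_k^{\,r/|r^{\circ}|}$ compatible with convergence of the data sum, after which a further enlargement of $q^{\circ}_k$ absorbs the amplification constants $C_k$; it also legitimizes the generating-function correspondence used in Theorem \ref{th: equivalence1}, on which the whole reduction rests.

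The step I expect to be the main obstacle is precisely this last one: reconciling the two competing requirements on $q^{\circ}_k$ --- large enough that the convolution factor coming from the $C_k$ in (M) converges, yet tied closely enough to $q_k$ and $r$ (through $r^{\circ}<0$) that the residual sum is still controlled by the prescribed data norms --- and verifying that the resulting constant $C$ in \eqref{eq:main1} depends only on $Q,r,T,\bA,\bM$. Once the series is summed, existence follows from Theorem \ref{th:eqnorm}, the solution property from Theorem \ref{th: equivalence1}, and uniqueness from the lower-triangular structure of the propagator.
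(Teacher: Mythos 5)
Your proposal is correct and reaches the same destination --- a weight sequence with $q^{\circ}_k$ growing roughly like $k\,C_k$ (plus whatever is needed to dominate $q_k$) and a negative $r^{\circ}$ --- but it organizes the key estimate differently from the paper. The paper splits the proof in two: Step 1 treats deterministic $\inc,f$, writes $u_{\alpha}$ explicitly as an iterated Duhamel integral indexed by the characteristic set $K_{\alpha}$, and deduces $\|u_{\alpha}\|_{\cV(T)}^2\leq \frac{(|\alpha|!)^2}{\alpha!}\big(\|\inc\|_H^2+\|f\|_{\cV'(T)}^2\big)\prod_k C_k^{2\alpha_k}$, which is then summed using $|\alpha|!\leq\alpha!(2\mbN)^{2\alpha}$ and the convergence of $\sum_{\alpha}(2\mbN)^{-r\alpha}$ for $r>1$, leading to $q^{\circ}_k=2k(1+C_k)$ and any $r^{\circ}<-2$; Step 2 reduces general data to Step 1 through the shift identity $u_{\alpha+\gamma}(\cdot;v,F,\gamma)/\sqrt{(\alpha+\gamma)!}=u_{\alpha}(\cdot;v/\sqrt{\gamma!},F/\sqrt{\gamma!},(0))/\sqrt{\alpha!}$ and then sums over $\gamma$ by the triangle and Cauchy--Schwarz inequalities --- and it is precisely there that the hypothesis $\sum_{\alpha}q^{-2r\alpha}/\alpha!<\infty$ (equivalently $\sum_k q_k^{-2r}<\infty$, as you note) is consumed. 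You instead run a single-pass argument: the scalar recursion $y_{\alpha}\leq a_{\alpha}+\sum_k\sqrt{\alpha_k}C_ky_{\alpha-\varepsilon_k}$, its closed-form majorant (which, for deterministic data, reproduces the paper's bound exactly), and a weighted Cauchy--Schwarz with $b_{\gamma}=\frac{|\gamma|!}{\gamma!}\rho^{\gamma}$ combined with the Vandermonde inequality $\binom{\beta+\gamma}{\beta}\leq\binom{|\beta|+|\gamma|}{|\beta|}$ and the multinomial identity. Your route avoids the lossy triangle inequality over the chaos modes of the data, makes the two competing demands on $q^{\circ}_k$ explicit, and in fact barely uses the hypothesis $\sum_kq_k^{-2r}<\infty$ at all (your appeal to it is the vaguest point of the write-up, but nothing breaks); the price is that you lose the clean deterministic-data case as a standalone statement, which the paper needs anyway for the Krylov--Veretennikov corollary that follows. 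One small point worth making explicit in a final write-up: assumption (M) controls $\int_0^t\Phi_{t,s}\bM_k(s)v(s)\,ds$ in $\cV(T)$, not $\bM_k v$ in $\cV'(T)$, so the constants $C_k$ in your recursion must be extracted from the Duhamel representation (as you do), not from feeding $\sum_k\sqrt{\alpha_k}\bM_ku_{\alpha-\varepsilon_k}$ into \eqref{det-reg} as a forcing term.
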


\begin{proof} The proof consists of two steps: first, we prove the
result for deterministic functions $\inc,f$ and then use linearity to
extend the result to the general case.

\emph{Step 1.} Assume that the functions $\inc\in H$,
$f\in \cV'(T)$ are deterministic so that $\inc_h=\inc$,
$f_h=f$.
By assumptions \eqref{det-reg} and \eqref{Ck-def}
the evolution equation
\begin{equation}
\label{eq:Str-sol}
u_h(t)=\inc+\int_0^t\left(\bA(s)u_h(s)+f(s)+\sum_{k=1}^{\infty}
\bM_k(s)u_h(s)h_k\right)\ds
\end{equation}
has a unique solution in $\cV(T)$ as long as
$\sum_{k\geq 1} h_k^2$ is small enough; since the equation
is linear, the  solution
is an analytic function of $h_k$.

Next,
\begin{equation}
u(t)=\sum_{\alpha\in \mathcal{J}} u_{\alpha}(t)\xi_{\alpha}
\end{equation}
and,  by \eqref{eq:evol-S}, the coefficients $u_{\alpha}$ satisfy
\begin{equation}
\begin{split}  \label{eq:S-syst-G}
u_{(0)}(t)&=\inc
+\int_0^t(\mathbf{A}(s) u_{(0)}(s)+f(s))ds,\ \ |\alpha|=0; \\
u_{\epsilon_k}(t )&
= \int_0^t\mathbf{A}(s) u_{\epsilon_k}(s)ds+ \int_0^t
\mathbf{M}_k(s) u_{(0)}(s )ds, \ \ |\alpha|=1;\\
u_{\alpha}(s )&= \int_0^t \mathbf{A}(s) u_{\alpha}(s )ds+
\sum_{k} \sqrt{\alpha_k} \int_0^t
 \mathbf{M}_k(s) u_{\alpha-\epsilon_k}(s )ds,\
|\alpha|>1.
\end{split}%
\end{equation}
Denote by $\Phi=\Phi_{s,t},\ t\geq s\geq 0$ the semigroup generated
by the operator $\mathbf{A}(t)$. It follows by induction on $|\alpha|$
that
\begin{equation}
\begin{split}
u_{(0)}(t )& = \Phi_{t,0}\inc(x) +\int_0^t\Phi_{t,s}f(s)ds, \
|\a|=0;\\
u_{\epsilon_k}(t )&= \int_0^t \Phi_{t,s}\mathbf{M}_k(s)
u_{(0)}(s )ds,\ |\a|=1; \\
u_{\alpha}(t )&=\frac{1}{\sqrt{\alpha!}} \int_0^t\int_0^{s_n}
\ldots
\int_0^{s_2} \Phi_{t,s_{n}}
\mathbf{M}_{i_n}\Phi_{s_{n},s_{n-1}} \cdots \mathbf{M}_{i_{n-1}}%
\Phi_{s_2,s_1}\mathbf{M}_{i_1} u_{(0)}\,
ds_1\ldots ds_n,\\
& \ |\alpha|=n>1, K_{\alpha}=\{i_1, \ldots, i_n\}.
\end{split}%
\end{equation}
By assumptions \eqref{det-reg} and \eqref{Ck-def},
\begin{equation}
\label{ual-est}
\|u_{\alpha}(t)\|_{\cV(T)}^2
 \leq \frac{(|\alpha|!)^2}{\alpha!}
\left(\|\inc\|_{H}^2
+  \|f\|^2_{\cV'(T)} \right)\prod_{k\geq 1} C_k^{2\alpha_k}.
\end{equation}
It is known (see, for example, \cite[pages 31, 35]{HOUZ}) that
\begin{equation}
|\alpha|!\leq\alpha!(2{\mathbb{N}})^{2\alpha}, \qquad
\sum_{\alpha\in \cJ}(2\mathbb{N})^{-r\alpha}<\infty
\ {\rm if\ and \ only \ if\ } r>1;
 \label{HOUZ-ineq}%
\end{equation}
where
\[
(2{\mathbb{N}})^{2\alpha}=2^{2|\alpha|}\,\prod_{k\geq1}k^{2\alpha_{k}}.
\]
Define the sequence $Q^{\circ}=\{q_k^{\circ},\, k\geq 1\}$ by
\begin{equation}
\label{bw}
q_k^{\circ}=2k(1+C_k).
\end{equation}
Then \eqref{eq:eqnorm}  and \eqref{ual-est} imply
$$
\|u\|_{Q^{\circ},r^{\circ};\cV(T)}^2
\leq C\cdot \left( \|\inc\|_{H}^2
+\|f(t)\|_{\cV'(T)}^2\right)\ {\rm for \ every\ } r^{\circ}<-2.
$$

\emph{Step 2.} As in Step 1, existence and uniqueness of solution
follows from unique solvability of the parabolic equation
\eqref{eq:Str-sol}, and it remains to establish (\ref{eq:main1}).

Given  $v\in H$, $F\in \cV'(T)$, and $\gamma\in \cJ$,
denote by $u(t ;v,F,\gamma)$
the solution of %
\eqref{eq:evol} with $\inc=v\xi_{\gamma}$, $f=F\xi_{\gamma}$.
If $\inc=\sum_{\alpha\in \mathcal{J}}\inc_{\alpha} \xi_{\alpha}$,
$f=\sum_{\alpha\in \cJ}f_{\alpha}\xi_{\alpha}$,
then, by linearity,
\begin{equation}  \label{eq:sol-sum}
u(t )=\sum_{\gamma\in \mathcal{J}}
u(t ;\inc_{\gamma},f_{\gamma},\gamma).
\end{equation}
It follows from (\ref{eq:S-syst-G}) that
$u_{\alpha}(t ;v,F,\gamma)=0$ if $|\alpha|<|\gamma|$ and
\begin{equation}
\frac{u_{\alpha+\gamma}(t ;v,F,\gamma)} {\sqrt{(\alpha+\gamma)!}}
= \frac{%
u_{\alpha}\left(t ;\frac{v}{\sqrt{\gamma!}},
\frac{F}{\sqrt{\gamma!}},(0)\right)}{\sqrt{\alpha!}}.
\end{equation}
Using the results of Step 1,
\begin{equation}
\|u(t;\inc_{\gamma},
 f_{\gamma},\gamma)\|_{Q^{\circ},r^{\circ};\cV(T)}
\leq \frac{C}{\sqrt{\gamma!}} \left(\|v_{\gamma}\|_{H}
+  \|f_{\gamma}(t)\|_{\cV'(T)}
 \right).
\end{equation}
With  $Q^{\circ}$ defined in \eqref{bw}  and $r^{\circ}<-2$,
 \eqref{eq:main1} now follows from \eqref{eq:sol-sum} by the triangle
and the Cauchy-Schwartz inequalities.
\end{proof}

\begin{corollary}[A generalization of the
Krylov-Veretennikov formula from \cite{KV}]
Assume that $\inc$ and $f$ are deterministic and
take $q^{\circ}_k$ from  \eqref{bw}. Define the
sequence $U_n=U_n(t)$, $n\geq 0$ by induction
\begin{equation}
U_0(t)=u_{(0)}(t),\ U_{n+1}(t)=\int_0^t
\Phi_{t,s}\boldsymbol{\delta}(\bM^{\circ}(s)U_n(s)) ds,\ n>0,
\end{equation}
where $\bM^{\circ}=(q^{\circ}_1\bM_1,q^{\circ}_2\bM_2,\ldots)$.
Then
$$
\sum_{|\alpha|=n}(q^{\circ})^{\alpha}u_{\alpha}(t)\,\xi_{\alpha}
=U_n(t).
$$
\end{corollary}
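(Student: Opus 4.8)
The plan is to prove the identity by induction on $n$, establishing at each level the coefficientwise statement that for every $\alpha\in\cJ$ with $|\alpha|=n$ the $\xi_{\alpha}$-coefficient of $U_n(t)$ equals $(q^{\circ})^{\alpha}u_{\alpha}(t)$; summing over $|\alpha|=n$ then yields the claim, with membership in $(\cL)_{Q^{\circ},r^{\circ}}(\mbF;\cV(T))$ inherited from the estimates in Theorem \ref{th:main}. For the base case $n=0$ the only index is $(0)$, and since $(q^{\circ})^{(0)}=1$ and $\xi_{(0)}=1$, the left-hand side is simply $u_{(0)}(t)=U_0(t)$ by definition.

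For the inductive step I would insert the hypothesis $U_n(s)=\sum_{|\beta|=n}(q^{\circ})^{\beta}u_{\beta}(s)\xi_{\beta}$ into the recursion and use \eqref{wp00} together with $\bM^{\circ}_k=q^{\circ}_k\bM_k$ to write
\[
\boldsymbol{\delta}(\bM^{\circ}(s)U_n(s))=\sum_{k\geq 1}\sum_{|\beta|=n}q^{\circ}_k\,(q^{\circ})^{\beta}\,\bM_k(s)u_{\beta}(s)\,\big(\xi_{\beta}\diamond\xi_k\big).
\]
Applying the Wick rule $\xi_{\beta}\diamond\xi_k=\sqrt{\beta_k+1}\,\xi_{\beta+\varepsilon_k}$ from Proposition \ref{prop:Ixi} and reindexing by $\alpha=\beta+\varepsilon_k$ (so that for fixed $\alpha$ the admissible pairs are $\beta=\alpha-\varepsilon_k$ over those $k$ with $\alpha_k\geq 1$), the $\xi_{\alpha}$-coefficient of $U_{n+1}(t)=\int_0^t\Phi_{t,s}\boldsymbol{\delta}(\bM^{\circ}(s)U_n(s))\,ds$ becomes
\[
(q^{\circ})^{\alpha}\sum_{k\geq 1}\sqrt{\alpha_k}\int_0^t \Phi_{t,s}\bM_k(s)u_{\alpha-\varepsilon_k}(s)\,ds.
\]

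Two bookkeeping points make this collapse to the desired form, and they constitute essentially the only content of the argument. First, the weight $q^{\circ}_k$ supplied by $\bM^{\circ}_k$ combines with $(q^{\circ})^{\alpha-\varepsilon_k}$ to produce exactly $(q^{\circ})^{\alpha}$, while $\sqrt{\beta_k+1}=\sqrt{\alpha_k}$; here \eqref{bw} enters only through the multiplicative structure $(q^{\circ})^{\alpha}=\prod_k(q^{\circ}_k)^{\alpha_k}$. Second, because $\inc$ and $f$ are deterministic their chaos expansions carry only the $(0)$-term, so $\inc_{\alpha}=f_{\alpha}=0$ for $|\alpha|\geq 1$ and the propagator \eqref{eq:evol-S} collapses to $u_{\alpha}(t)=\int_0^t\big(\bA(s)u_{\alpha}(s)+\sum_k\sqrt{\alpha_k}\,\bM_k(s)u_{\alpha-\varepsilon_k}(s)\big)\,ds$. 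The variation-of-constants representation $U(t)=\Phi_{t,0}U_0+\int_0^t\Phi_{t,s}F(s)\,ds$ from assumption (A), applied with zero initial value, then identifies the displayed integral with $u_{\alpha}(t)$ itself, so the $\xi_{\alpha}$-coefficient of $U_{n+1}(t)$ is precisely $(q^{\circ})^{\alpha}u_{\alpha}(t)$, closing the induction. The hard part is purely this combinatorial matching of the weights and the $\sqrt{\alpha_k}$ factors across the Wick product and the propagator; no analytic estimate beyond those already established for Theorem \ref{th:main} is required.
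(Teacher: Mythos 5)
Your proposal is correct and follows essentially the same route as the paper: the paper also matches the $(q^{\circ})^{\alpha}$-scaled propagator (in its semigroup form, with the initial-value and forcing terms absent for $|\alpha|\geq 1$ since $\inc,f$ are deterministic) against the coefficient formula for the Skorokhod integral, using \eqref{eq:IBasG} where you invoke the equivalent Proposition \ref{prop:Ixi} and the reindexing $\alpha=\beta+\varepsilon_{k}$. Your write-up just makes the induction on $n$ and the bookkeeping of the weights $q^{\circ}_{k}$ and the $\sqrt{\alpha_{k}}$ factors explicit.
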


\begin{proof}
By \eqref{eq:S-syst-G},
\[
(q^{\circ})^{\alpha}u_{\alpha}(t)
=\int_{0}^{t}{\mathbf{A}}(q^{\circ})^{\alpha}u_{\alpha}%
(s)ds+\sum_{k\geq1}\int_{0}^{t}q_{k}\sqrt{\alpha_{k}}\,{\mathbf{M}}%
_{k}(q^{\circ})^{\alpha-\varepsilon_{k}}
u_{\alpha-\varepsilon_{k}}(s)ds.
\]
Therefore,
$$
(U_{n}(t))_{\alpha} =
\sum_{k\geq1}\sqrt{\alpha_{k}}\,\int_{0}^{t}\Phi_{t-s}
\bM^{\circ}_{k}(U_{n-1}(s))_{\alpha-\varepsilon_{k}}ds.
$$
By \eqref{eq:IBasG},
$$
U_n(t)=\int_0^t\Phi_{t,s}
\boldsymbol{\delta}(\bM^{\circ}(s)U_{n-1}(s)) ds
$$
and the result follows.
\end{proof}

\begin{remark}
The main goal of Theorem \ref{th:main} is universality:
to cover the largest possible class of equations. It is
therefore inevitable, and natural, that, for many particular
equations, there are much better regularity results than
\eqref{eq:main1}.
\end{remark}


\providecommand{\bysame}{\leavevmode\hbox to3em{\hrulefill}\thinspace}
\providecommand{\MR}{\relax\ifhmode\unskip\space\fi MR }
\providecommand{\MRhref}[2]{%
  \href{http://www.ams.org/mathscinet-getitem?mr=#1}{#2}
}
\providecommand{\href}[2]{#2}

\end{document}